\newtheorem{theorem}{Theorem}[section]
\newtheorem{lemma}[theorem]{Lemma}
\newtheorem{cor}[theorem]{Corollary}
\theoremstyle{remark}
\newtheorem{remark}[theorem]{\bf{Remark}}
\numberwithin{equation}{section}
\begin{document}
	
	\title [Berezin number and Berezin norm inequalities]{Berezin number and Berezin norm inequalities via Moore-Penrose inverse}
	\author[S. Mahapatra, A. Sen, R. Birbonshi, K. Paul]{Saikat Mahapatra, Anirban Sen, Riddhick Birbonshi, Kallol Paul}
	
	\address[Mahapatra]{Department of Mathematics, Jadavpur University, Kolkata 700032, West Bengal, India}
	\email{smpatra.lal2@gmail.com}
	
	\address[Sen] {Department of Mathematics, Jadavpur University, Kolkata 700032, West Bengal, India}
	\email{anirbansenfulia@gmail.com}

	\address[Birbonshi] {Department of Mathematics, Jadavpur University, Kolkata 700032, West Bengal, India}
	\email{riddhick.math@gmail.com}
	
	\address[Paul]{Vice-Chancellor, Kalyani University \& Professor of Mathematics\\ Jadavpur University (on lien) \\ Kolkata \\ West Bengal\\ India}
	\email{kalloldada@gmail.com}
	

	\subjclass[2020]{47A05,47B15,47B32.}
	
	\keywords{Berezin number, Berezin norm, Reproducing kernel Hilbert space, Moore-Penrose inverse, Bounded operator, Inequality.}

	\maketitle	
	
	\begin{abstract} 
		In this article, we establish the Berezin number and Berezin norm inequalities for bounded linear operators on a reproducing kernel Hilbert space using the Moore-Penrose inverse. The inequalities obtained here refine and generalize the earlier inequalities.
	\end{abstract}
	
	\section{Introduction}
	Let $ \mathscr{H}$ be a complex separable Hilbert space with inner product $\langle \cdot, \cdot \rangle$ and let $\|\cdot\|$ be the norm induced by the inner product. Let $ \mathscr{B}( \mathscr{H})$ be the $C^*$-algebra of all bounded linear operators on $ \mathscr{H}.$
	An operator $A \in  \mathscr{B}( \mathscr{H})$ is a positive operator if $\langle Ax,x \rangle \ge 0$ for all $x \in  \mathscr{H}$ and denoted by $A\ge 0.$ For $A \in  \mathscr{B}( \mathscr{H}),$ $|A|$ stands for the positive operator $(A^*A)^{1/2}.$
	Let $ \mathscr{CR}( \mathscr{H})$ be the set defined as 
	$ \mathscr{CR}( \mathscr{H})=\{A \in  \mathscr{B}( \mathscr{H}) : \text{range of $A$ is closed}\}.$
	If $A \in  \mathscr{CR}( \mathscr{H}),$ then there exists a unique $A^{\dag} \in  \mathscr{B}( \mathscr{H})$ which satisfies the following relations:
	\begin{align*}
		(a)~~AA^{\dag}A=A,
		(b)~~A^{\dag}AA^{\dag}=A^{\dag},
		(c)~~(AA^{\dag})^*=AA^{\dag},
		(d)~~(A^{\dag}A)^*=A^{\dag}A.
	\end{align*}
	Here, the operator $A^{\dag}$ is called the Moore-Penrose inverse of $A.$ For further information on the Moore-Penrose inverse, we refer to \cite{AB_02, Pen_55}.

	A reproducing kernel Hilbert space $ \mathscr{H}$ on a non-empty set $X$ is a Hilbert space of all complex-valued functions on $X$ with the property that for every $x \in X$, the corresponding linear evaluation functional on $ \mathscr{H}$ given by $\phi \to \phi(x),$ is continuous (see \cite{PR_Book_2016}). According to the Riesz representation theorem for each $x \in X$, there exists a unique $ k_x \in \mathscr{H}$ such that $\phi(x)=\langle \phi, k_x \rangle,$ for all $\phi \in  \mathscr{H}$. Here, $ k_x$ is the reproducing kernel at point $x$ and the collection $\{k_x :  x \in X \}$ is the set of all reproducing kernels of $ \mathscr{H}.$ The normalized reproducing kernel at the pont $x$ is given by $\hat{k}_{x} = k_x/\|k_x\|$ and the collection $\{\hat{k}_{x} :  x \in X \}$ is the set of all normalized reproducing kernels of $ \mathscr{H}.$
	
	If $\mathscr{H}$ is a reproducing kernel Hilbert space then for every $A \in  \mathscr{B}( \mathscr{H}),$ the Berezin transform of $A,$ (see \cite{BER1,BER2}) is the function $\widetilde{A}$ on $X$ defined by
	$$\widetilde{A}(x)=\langle A\hat{k}_{x},\hat{k}_{x} \rangle~~\text{for all $x \in X$}.$$
	The Berezin range of $A$ was introduced in \cite{K_CAOT_2013} as 
	$$\textbf{Ber}(A):= \{\widetilde{A}(x) : x \in X\}.$$
	For $A \in  \mathscr{B}( \mathscr{H}),$ the quantities $\textbf{ber}(A)$ and $\|A\|_{ber},$ referred to as the Berezin number and the Berezin norm of 
	$A$, respectively, are given by  
	$$\textbf{ber}(A):=\sup \left\{|\widetilde{A}(x)| : x \in X \right\}$$
	and 
	$$\|A\|_{ber}:=\sup\left\{|\langle A\hat{k}_{x},\hat{k}_{y}\rangle | : x,y \in X\right\},$$
	see \cite{BY_JIA_2020,KAR_JFA_2006}. Furthermore, if $A$ is positive, then $\|A\|_{ber}=\textbf{ber}(A),$ see \cite{BPS_CAOT_23}.
	A reproducing kernel Hilbert space possesses the ``Ber property", if the Berezin transform completely characterizes the operator (see \cite{KI_MFAT_2013}). It is well known that the Hardy-Hilbert space is a reproducing kernel Hilbert space which has the ``Ber property" (see \cite{ZHU_book_2007}). Recall that, the Hardy-Hilbert space on the unit disk $\mathbb{D}$ is denoted by $H^2(\mathbb{D}),$ and is defined as 
	$$H^2(\mathbb{D})=\left\{\phi : \phi(z)=\sum_{n=0}^{\infty}a_nz^n~~\text {with}~~ \sum_{n=0}^{\infty}|a_n|^2 ~~\text{is finite}\right\},$$
	see \cite{RR_book_2007}.
	Here we note that the Berezin number generally does not define a norm on $\mathscr{B}(\mathscr{H}),$ unless $\mathscr{H}$ possesses the ``Ber property". Over the years, many mathematicians have studied the Berezin number and Berezin norm inequalities of reproducing kernel Hilbert space operators, and we refer to \cite{NFAO,GA_CAOT_2021,HLB_LAMA_2020,MMM_JMAA_23,RMS,YI_LAMA_2022,YG_NYJM_2017}. 
	
	In this article, we obtain the Berezin number and Berezin norm inequalities of bounded linear operators acting on reproducing kernel Hilbert space by using the Moore-Penrose inverse. In comparison with concrete examples, we also observe that the obtained inequalities provide sharper bounds than the existing results.
	
	\section{Preliminaries}
	We begin this section with some auxiliary results that will be used repeatedly to obtain our main goal.  The first lemma is an easy extension of the classical Jensen and Young 
	inequalities.
	\begin{lemma}\cite{hardy1952inequalities}\label{cat}
		Let $\alpha,\beta$ be two positive real numbers.  Then for any $r\ge 1$,
		\begin{eqnarray*}
			(i)~~ \alpha^{\lambda} \beta^{\mu}&\le &\lambda \alpha+ \mu \beta\le \big(\lambda \alpha^r+\mu \beta^r\big)^\frac{1}{r}, \\
			(ii)~~ \alpha\beta&\le& \frac{\alpha^{\gamma}}{\gamma}+\frac{\beta^{\delta}}{\delta}\le \bigg(\frac{\alpha^{\gamma r}}{\gamma}+\frac{\beta^{\delta r}}{\delta}\bigg)^{\frac{1}{r}} ,
		\end{eqnarray*}
		where  $0\le \lambda,\mu\le 1$, $\lambda+\mu=1$ and $\gamma,\delta>1$, $\frac{1}{\gamma}+\frac{1}{\delta}=1$.
	\end{lemma}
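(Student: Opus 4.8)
The plan is to obtain part (ii) as a direct substitution instance of part (i), so the real work is the two chained inequalities in (i), each of which is a one-line consequence of a classical convexity fact. For the left inequality in (i), namely $\alpha^{\lambda}\beta^{\mu}\le \lambda\alpha+\mu\beta$, I would invoke concavity of the logarithm on $(0,\infty)$: since $\log$ is concave, $\log(\lambda\alpha+\mu\beta)\ge \lambda\log\alpha+\mu\log\beta=\log(\alpha^{\lambda}\beta^{\mu})$, and exponentiating gives the claim. The degenerate cases $\lambda=0$ or $\mu=0$ are trivial and should be dispatched separately, since $\log$ is not defined at $0$.

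For the right inequality in (i), namely $\lambda\alpha+\mu\beta\le(\lambda\alpha^{r}+\mu\beta^{r})^{1/r}$, I would use convexity of $t\mapsto t^{r}$ on $[0,\infty)$ for $r\ge 1$. Applying Jensen's inequality to the two-point weighted mean with weights $\lambda,\mu$ yields $(\lambda\alpha+\mu\beta)^{r}\le \lambda\alpha^{r}+\mu\beta^{r}$, and taking $r$-th roots (both sides nonnegative) gives the desired bound. Chaining this with the previous paragraph proves (i).

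For (ii), put $\lambda=\tfrac{1}{\gamma}$ and $\mu=\tfrac{1}{\delta}$; the hypothesis $\tfrac{1}{\gamma}+\tfrac{1}{\delta}=1$ together with $\gamma,\delta>1$ guarantees $\lambda+\mu=1$ and $0<\lambda,\mu<1$, so (i) applies. Substituting $\alpha\mapsto\alpha^{\gamma}$ and $\beta\mapsto\beta^{\delta}$ in (i) produces
\[
(\alpha^{\gamma})^{1/\gamma}(\beta^{\delta})^{1/\delta}\le \frac{\alpha^{\gamma}}{\gamma}+\frac{\beta^{\delta}}{\delta}\le\left(\frac{\alpha^{\gamma r}}{\gamma}+\frac{\beta^{\delta r}}{\delta}\right)^{1/r},
\]
and since the leftmost expression equals $\alpha\beta$, this is exactly (ii).

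I do not expect a genuine obstacle here: the statement is merely a convenient packaging of the weighted arithmetic–geometric mean inequality, Young's inequality, and the power-mean (Jensen) inequality. The only points demanding a little care are the bookkeeping of the degenerate weight values in (i) and checking that the exponent substitution in (ii) is consistent with the constraints on $\gamma,\delta$; both are routine.
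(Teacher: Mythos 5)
Your proof is correct and complete: the weighted AM--GM step via concavity of $\log$, the power-mean step via convexity of $t\mapsto t^{r}$ for $r\ge 1$, and the substitution $\lambda=1/\gamma$, $\mu=1/\delta$, $\alpha\mapsto\alpha^{\gamma}$, $\beta\mapsto\beta^{\delta}$ to deduce (ii) all check out, and you rightly flag the degenerate weights $\lambda\in\{0,1\}$. The paper itself gives no proof --- it cites the lemma from Hardy--Littlewood--P\'olya and describes it as an easy extension of the Jensen and Young inequalities --- and your argument is exactly the standard derivation that description alludes to.
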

	The second lemma is called Bohr's inequality, and it deals with a finite number of positive numbers.
	
	\begin{lemma}\cite{vasic1971some}
		Let $\alpha_i$ $(where\,\, i=1,2,\cdots,n )$ be positive real numbers. Then for any $r\ge 1$ \label{mit}\[\bigg(\sum_{i=1}^n \alpha_i\bigg)^r\le n^{r-1}\sum_{i=1}^{n}\alpha^r_i.\]
	\end{lemma}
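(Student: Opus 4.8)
The plan is to deduce this bound from the convexity of the power function $t \mapsto t^{r}$ on $[0,\infty)$, which is exactly where the hypothesis $r \ge 1$ enters. Recalling that for a convex $f$ and points $\alpha_{1},\dots,\alpha_{n}$ in its domain, Jensen's inequality gives $f\!\left(\frac1n\sum_{i=1}^{n}\alpha_{i}\right)\le \frac1n\sum_{i=1}^{n}f(\alpha_{i})$, I would apply this with $f(t)=t^{r}$ to obtain
\[
\left(\frac1n\sum_{i=1}^{n}\alpha_{i}\right)^{\!r}\le \frac1n\sum_{i=1}^{n}\alpha_{i}^{r},
\]
and then multiply both sides by $n^{r}$ to arrive at $\left(\sum_{i=1}^{n}\alpha_{i}\right)^{r}\le n^{r-1}\sum_{i=1}^{n}\alpha_{i}^{r}$.

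Since the paper already has Lemma~\ref{cat} at its disposal, I would alternatively give a self-contained proof by induction on $n$ that stays inside the paper's toolkit. The case $n=2$ is Lemma~\ref{cat}(i) with $\lambda=\mu=\tfrac12$, which reads $\tfrac12(\alpha_{1}+\alpha_{2})\le\left(\tfrac12\alpha_{1}^{r}+\tfrac12\alpha_{2}^{r}\right)^{1/r}$. For the step from $n$ to $n+1$, write $S=\sum_{i=1}^{n}\alpha_{i}$ and apply Lemma~\ref{cat}(i) with weights $\lambda=\tfrac{n}{n+1}$, $\mu=\tfrac{1}{n+1}$ to the numbers $\tfrac{S}{n}$ and $\alpha_{n+1}$; substituting the inductive hypothesis $\left(\tfrac{S}{n}\right)^{r}\le\tfrac1n\sum_{i=1}^{n}\alpha_{i}^{r}$ and simplifying yields $\left(\sum_{i=1}^{n+1}\alpha_{i}\right)^{r}\le(n+1)^{r-1}\sum_{i=1}^{n+1}\alpha_{i}^{r}$.

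A third and shortest route is a one-line application of H\"older's inequality to the vectors $(\alpha_{1},\dots,\alpha_{n})$ and $(1,\dots,1)$ with conjugate exponents $r$ and $r/(r-1)$, giving $\sum_{i=1}^{n}\alpha_{i}\le\left(\sum_{i=1}^{n}\alpha_{i}^{r}\right)^{1/r}n^{(r-1)/r}$, after which raising to the $r$-th power is immediate. There is no genuine obstacle here; the only points requiring a little care are that the statement is about \emph{all} real $r\ge 1$, so one must use an analytic input (convexity of $t^{r}$, equivalently a weighted power-mean comparison) rather than a purely algebraic manipulation, and that the degenerate case $r=1$, where the inequality becomes an equality, should be separated out in the H\"older version to avoid dividing by $r-1$. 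I would present the Jensen proof for brevity but would be happy to substitute the Lemma~\ref{cat} induction if the authors prefer the lemma to rest only on results already stated.
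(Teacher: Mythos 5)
Your proof is correct, but there is nothing in the paper to compare it against: Lemma~\ref{mit} is imported by citation to Vasi\'c--Ke\^cki\'c and the authors give no proof of their own. All three of your routes are valid. The Jensen argument (convexity of $t\mapsto t^{r}$ for $r\ge 1$ applied with equal weights, then scaling by $n^{r}$) is the standard one-line derivation and is the natural choice if a proof were to be included. Your induction via Lemma~\ref{cat}(i) is also sound --- the weights $\lambda=\tfrac{n}{n+1}$, $\mu=\tfrac{1}{n+1}$ applied to $S/n$ and $\alpha_{n+1}$ do reproduce the arithmetic mean of all $n+1$ terms, and the inductive hypothesis slots in cleanly --- and it has the merit of resting only on results already stated in the paper, though it is longer than necessary. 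The H\"older version is fine as well, with your caveat about $r=1$ correctly noted. In short: no gap, and any of the three would serve; the paper simply chose to cite rather than prove.
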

	The third lemma is a consequence of Jensen's inequality and the spectral theorem for positive operators.
	
	\begin{lemma}\label{dog}
		\cite{furuta2005mond} If $A\in \mathscr{B}( \mathscr{H})$ is  positive  and $u\in  \mathscr{H}$ with $\| u\|=1$ then 
		\begin{eqnarray*}
			(i)~~ \langle Au,u\rangle^r&\le& \langle A^ru,u\rangle \,\,\,\mbox{ for $r\ge 1$},\\
			(ii)~~ \langle Au,u\rangle^r &\ge& \langle A^ru,u\rangle\,\,\,\mbox{ for $0<r\le 1$}.
		\end{eqnarray*}
	\end{lemma}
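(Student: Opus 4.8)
The plan is to reduce both inequalities to a single application of Jensen's inequality through the spectral calculus. Since $A$ is positive, the spectral theorem provides a (projection-valued) spectral measure $E(\cdot)$ supported on $\sigma(A)\subseteq[0,\infty)$ with $A=\int_{\sigma(A)}\lambda\,dE(\lambda)$, and more generally $A^{r}=\int_{\sigma(A)}\lambda^{r}\,dE(\lambda)$ for every $r>0$ via the continuous functional calculus. Fixing the unit vector $u$, I would set $\mu(S)=\langle E(S)u,u\rangle$ for Borel $S\subseteq\sigma(A)$; because $E$ is projection-valued and $\|u\|=1$, the set function $\mu$ is a Borel probability measure on $\sigma(A)$. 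With this notation $\langle Au,u\rangle=\int_{\sigma(A)}\lambda\,d\mu(\lambda)$ and $\langle A^{r}u,u\rangle=\int_{\sigma(A)}\lambda^{r}\,d\mu(\lambda)$, so the two sides of each claimed inequality become $\big(\int_{\sigma(A)}\lambda\,d\mu\big)^{r}$ and $\int_{\sigma(A)}\lambda^{r}\,d\mu$ respectively.

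Next I would invoke the convexity/concavity of the power function on $[0,\infty)$. For $r\ge 1$ the map $t\mapsto t^{r}$ is convex, so Jensen's inequality applied to the probability measure $\mu$ yields $\big(\int_{\sigma(A)}\lambda\,d\mu\big)^{r}\le\int_{\sigma(A)}\lambda^{r}\,d\mu$, which is exactly part $(i)$. For $0<r\le 1$ the map $t\mapsto t^{r}$ is concave on $[0,\infty)$, so the same reasoning with the reversed Jensen inequality gives $\big(\int_{\sigma(A)}\lambda\,d\mu\big)^{r}\ge\int_{\sigma(A)}\lambda^{r}\,d\mu$, which is part $(ii)$.

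There is essentially no serious obstacle here; the only points that need a little care are verifying that $\mu$ is genuinely a probability measure (this is precisely where the hypothesis $\|u\|=1$ enters) and that $\lambda\mapsto\lambda^{r}$ is $\mu$-integrable, which is automatic since $\sigma(A)$ is compact and the integrand is continuous there. An alternative, fully elementary route avoiding measure theory would be to first treat the diagonalizable (in particular finite-dimensional) case by applying the classical Jensen inequality for convex functions to the convex combination of eigenvalues with weights $|\langle u,e_{j}\rangle|^{2}$, and then pass to the general bounded positive operator by norm approximation of $A^{r}$; I expect the spectral-measure argument to be the cleanest to write down.
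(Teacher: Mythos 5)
Your argument is correct and is exactly the route the paper indicates: it introduces the lemma as ``a consequence of Jensen's inequality and the spectral theorem for positive operators'' and otherwise defers to the cited reference \cite{furuta2005mond}, so your spectral-measure/Jensen write-up fills in precisely that sketch. The only points needing care --- that $\mu(S)=\langle E(S)u,u\rangle$ is a probability measure because $\|u\|=1$, and the convexity versus concavity of $t\mapsto t^{r}$ on $[0,\infty)$ --- are ones you already handle.
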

	The fourth lemma is  known as the Buzano's inequality.
	\begin{lemma}\cite{buzano1971generalizzazione}
		If $u,v,w\in\mathscr{H}$ and $\| w\| =1$, then the following inequality holds: \label{lmc} \[
		|\langle u,w\rangle \langle w,v\rangle|\le\frac{1}{2} \bigg(\langle u,u\rangle^{\frac{1}{2}}\langle v,v \rangle^{\frac{1}{2}}+|\langle u,v\rangle|\bigg).
		\] 
	\end{lemma}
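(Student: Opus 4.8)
The plan is to reduce Buzano's inequality to the ordinary Cauchy–Schwarz inequality by exploiting the rank-one orthogonal projection onto the line spanned by $w$. Since $\|w\|=1$, the operator $P\in\mathscr{B}(\mathscr{H})$ defined by $Px=\langle x,w\rangle w$ is the orthogonal projection onto $\mathrm{span}\{w\}$; in particular $P=P^*=P^2$, and $R:=2P-I$ is a self-adjoint unitary, so that $\|R\|=1$. The first key observation is the identity
\[
\langle u,w\rangle\langle w,v\rangle=\langle Pu,v\rangle ,
\]
which rewrites the left-hand side of the claimed inequality entirely in terms of $P$.

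Next I would split $P=\tfrac12(I+R)$, which gives $\langle Pu,v\rangle=\tfrac12\langle u,v\rangle+\tfrac12\langle Ru,v\rangle$. Applying the triangle inequality, then Cauchy–Schwarz, and then $\|Ru\|=\|u\|$ (since $R$ is unitary), I obtain
\[
|\langle u,w\rangle\langle w,v\rangle|=|\langle Pu,v\rangle|\le \tfrac12|\langle u,v\rangle|+\tfrac12\|Ru\|\,\|v\|\le \tfrac12\bigl(|\langle u,v\rangle|+\|u\|\,\|v\|\bigr),
\]
which is exactly the asserted bound once one recalls $\|u\|=\langle u,u\rangle^{1/2}$ and $\|v\|=\langle v,v\rangle^{1/2}$.

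There is no serious obstacle here; the only point requiring a moment's care is checking that $R=2P-I$ is unitary, which is immediate from $R^*R=(2P-I)^2=4P^2-4P+I=I$ using $P^2=P=P^*$. As an alternative route that avoids operator language, one can write $u=\langle u,w\rangle w+u'$ and $v=\langle v,w\rangle w+v'$ with $u',v'\perp w$, expand $\langle u,v\rangle$, $\|u\|^2$ and $\|v\|^2$ in terms of these components, and close the argument using only the scalar inequality $\sqrt{(p^2+s^2)(q^2+t^2)}\ge pq+st$ together with the reverse triangle inequality. I would present the projection proof as the main argument because of its brevity, and this is the version I would write up in detail.
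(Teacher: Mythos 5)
Your argument is correct: the identity $\langle u,w\rangle\langle w,v\rangle=\langle Pu,v\rangle$ for the rank-one projection $P$, the splitting $P=\tfrac12(I+R)$ with $R=2P-I$ a self-adjoint involution (hence unitary), and the combination of the triangle and Cauchy--Schwarz inequalities do yield exactly $|\langle u,w\rangle\langle w,v\rangle|\le\tfrac12\bigl(\|u\|\,\|v\|+|\langle u,v\rangle|\bigr)$, which is the stated bound. There is nothing to compare against in the paper itself: the lemma is quoted from the literature with a citation and no proof is given, so your write-up supplies a proof where the authors supply none. The reflection argument you chose is the standard short proof of Buzano's inequality, and the only point needing care, the unitarity of $R$, is handled correctly via $R^*R=(2P-I)^2=I$.
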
 
	The fifth lemma is called as the mixed Schwarz inequality.
	\begin{lemma}
		\cite{halmos2012hilbert}  If $A\in \mathscr{B}( \mathscr{H})$ and $u\in  \mathscr{H}$, then\label{nnnm}\[|\langle Au,u\rangle|^2\le\langle |A|u,u\rangle\langle |A^*|u,u\rangle.\]
	\end{lemma}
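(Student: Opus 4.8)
The plan is to prove the inequality via the polar decomposition of $A$, which is the classical route (this is the argument going back to Halmos that the citation points to). Write $A=U|A|$, where $U$ is the partial isometry with initial space $\overline{\operatorname{ran}|A|}$ and final space $\overline{\operatorname{ran}A}$; recall that such a decomposition exists for \emph{every} $A\in\mathscr{B}(\mathscr{H})$, that $U^*U$ is the orthogonal projection onto $\overline{\operatorname{ran}|A|}$, and hence that $U^*U|A|=|A|$.

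First I would rewrite the quadratic form: for $u\in\mathscr{H}$,
$$\langle Au,u\rangle=\langle U|A|^{1/2}\,|A|^{1/2}u,u\rangle=\langle |A|^{1/2}u,\bigl(U|A|^{1/2}\bigr)^*u\rangle=\langle |A|^{1/2}u,|A|^{1/2}U^*u\rangle .$$
Applying the Cauchy--Schwarz inequality to the pair of vectors $|A|^{1/2}u$ and $|A|^{1/2}U^*u$ then gives
$$|\langle Au,u\rangle|^2\le \bigl\||A|^{1/2}u\bigr\|^2\,\bigl\||A|^{1/2}U^*u\bigr\|^2=\langle |A|u,u\rangle\,\langle U|A|U^*u,u\rangle .$$

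The remaining step, which I expect to be the only genuine obstacle, is to identify the positive operator $U|A|U^*$ with $|A^*|$. Using $U^*U|A|=|A|$ one computes $\bigl(U|A|U^*\bigr)^2=U|A|(U^*U)|A|U^*=U|A|^2U^*=(U|A|)(U|A|)^*=AA^*$. Since $U|A|U^*\ge 0$, uniqueness of the positive square root yields $U|A|U^*=(AA^*)^{1/2}=|A^*|$. Substituting this into the previous display gives $|\langle Au,u\rangle|^2\le\langle |A|u,u\rangle\langle |A^*|u,u\rangle$, as claimed; note that no closed-range hypothesis on $A$ enters the argument, so the statement holds for all $A\in\mathscr{B}(\mathscr{H})$. (An alternative to the square-root identification is to verify $\langle U|A|U^*u,u\rangle=\langle |A|U^*u,U^*u\rangle=\langle |A^*|u,u\rangle$ by checking the last equality on the dense range of combinations of $U^*$ and its orthocomplement, but the square-root computation above is cleaner.)
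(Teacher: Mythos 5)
Your proof is correct: the polar decomposition, the factorization $\langle Au,u\rangle=\langle |A|^{1/2}u,|A|^{1/2}U^*u\rangle$, the Cauchy--Schwarz step, and the identification $U|A|U^*=|A^*|$ via uniqueness of the positive square root (using $U^*U|A|=|A|$) are all sound, and no closed-range hypothesis is needed. The paper itself gives no proof --- it only cites Halmos --- and your argument is precisely the classical one from that source, so there is nothing to reconcile.
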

	The following lemma is an inner product inequality that is involved in the
	Moore-Penrose inverse of an operator.
	\begin{lemma}
		\cite{sababheh2024numerical} Let $A\in \mathscr{CR}( \mathscr{H})$ and $u,v\in \mathscr{H}$\label{bandor}. Then
		\[|\langle Au,v\rangle|^2\le\langle |A|^2u,u\rangle \langle AA^{\dag}v,v\rangle.\]
	\end{lemma}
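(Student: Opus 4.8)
The plan is to exploit the fact that the Moore--Penrose relations (a)--(d) force $P:=AA^{\dag}$ to be the orthogonal projection of $\mathscr{H}$ onto the (closed) range of $A$. Indeed, $P^{*}=P$ is precisely relation (c), while $P^{2}=AA^{\dag}AA^{\dag}=(AA^{\dag}A)A^{\dag}=AA^{\dag}=P$ by (a); hence $P$ is a self-adjoint idempotent, i.e. an orthogonal projection, and $PA=AA^{\dag}A=A$ shows that its range is $\mathrm{ran}(A)$.

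The next step is to insert this projection into the inner product $\langle Au,v\rangle$. Since $Au\in\mathrm{ran}(A)$ we have $PAu=AA^{\dag}Au=Au$ (again by (a)), and because $P$ is self-adjoint,
$$\langle Au,v\rangle=\langle PAu,v\rangle=\langle Au,Pv\rangle=\langle Au,AA^{\dag}v\rangle.$$
Applying the Cauchy--Schwarz inequality to the right-hand side then gives
$$|\langle Au,v\rangle|^{2}=|\langle Au,AA^{\dag}v\rangle|^{2}\le\|Au\|^{2}\,\|AA^{\dag}v\|^{2}.$$

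Finally, I would rewrite both factors on the right as the desired quadratic forms: $\|Au\|^{2}=\langle A^{*}Au,u\rangle=\langle|A|^{2}u,u\rangle$, and, using $P^{*}=P=P^{2}$,
$$\|AA^{\dag}v\|^{2}=\|Pv\|^{2}=\langle P^{*}Pv,v\rangle=\langle Pv,v\rangle=\langle AA^{\dag}v,v\rangle.$$
Substituting these identities into the previous display yields $|\langle Au,v\rangle|^{2}\le\langle|A|^{2}u,u\rangle\langle AA^{\dag}v,v\rangle$, as claimed. There is no substantial obstacle here; the only point requiring care is the verification that $AA^{\dag}$ is genuinely an orthogonal projection onto $\mathrm{ran}(A)$ directly from the four Penrose identities, since that is what legitimizes the otherwise routine step $PAu=Au$ — after that, the proof is just Cauchy--Schwarz together with bookkeeping on inner products.
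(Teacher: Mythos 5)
Your argument is correct and complete: $AA^{\dag}$ is indeed a self-adjoint idempotent (hence the orthogonal projection onto $\mathrm{ran}(A)$) by the Penrose identities, the insertion $\langle Au,v\rangle=\langle Au,AA^{\dag}v\rangle$ is valid, and the Cauchy--Schwarz step together with $\|AA^{\dag}v\|^{2}=\langle AA^{\dag}v,v\rangle$ gives exactly the stated bound. The paper itself offers no proof of this lemma --- it is quoted from the reference of Sababheh, Djordjevi\'c and Moradi --- and your derivation is precisely the standard argument behind that cited result, so there is nothing to add.
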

	The next lemma is an inner product inequality that depends on a $2\times 2$ positive block matrix.
	
	\begin{lemma}\cite{kittaneh1988notes}\label{mm}
		Let $A_1,A_2,A_3\in  \mathscr{B}( \mathscr{H})$ such that $A_1$ and $A_2$ are both positive. Then $|\langle A_3u,v\rangle|^2\le \langle A_1u,u\rangle\langle A_2v,v\rangle$ holds $\forall$ $u,v\in\mathscr{H}$ if and only if $\begin{bmatrix}
			A_1 & A^*_3\\
			A_3 & A_2
		\end{bmatrix} $ is positive in $ \mathscr{B(H}\oplus  \mathscr{H)}$.
	\end{lemma}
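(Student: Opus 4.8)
The plan is to prove both implications by unwinding the positivity of the block operator $M := \begin{bmatrix} A_1 & A_3^* \\ A_3 & A_2 \end{bmatrix}$ on $\mathscr{H} \oplus \mathscr{H}$ into an elementary scalar inequality. The computational core is the identity
\[
\left\langle M \begin{bmatrix} u \\ v \end{bmatrix}, \begin{bmatrix} u \\ v \end{bmatrix} \right\rangle = \langle A_1 u, u \rangle + \langle A_2 v, v \rangle + 2\,\operatorname{Re}\langle A_3 u, v \rangle \qquad (u, v \in \mathscr{H}),
\]
which I would verify first; it shows that $M \ge 0$ means precisely that the right-hand side is nonnegative for every pair $u, v$.

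For the ($\Leftarrow$) direction I would assume $M \ge 0$ and exploit the freedom in $u, v$. Replacing $v$ by $e^{i\theta} v$ and choosing $\theta$ so that $e^{-i\theta}\langle A_3 u, v\rangle \ge 0$ gives $2\,|\langle A_3 u, v\rangle| \le \langle A_1 u, u\rangle + \langle A_2 v, v\rangle$; then replacing $u$ by $tu$ and $v$ by $t^{-1}v$ and optimizing over $t > 0$ sharpens the right-hand side to $2\sqrt{\langle A_1 u, u\rangle \langle A_2 v, v\rangle}$, which is the claimed inequality after squaring.

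For the ($\Rightarrow$) direction I would start from the inequality and run the estimate in the opposite order: $2\,\operatorname{Re}\langle A_3 u, v\rangle \ge -2\,|\langle A_3 u, v\rangle| \ge -2\sqrt{\langle A_1 u, u\rangle \langle A_2 v, v\rangle} \ge -\big(\langle A_1 u, u\rangle + \langle A_2 v, v\rangle\big)$, where the last step is the scalar AM--GM inequality. Substituting into the identity shows the quadratic form of $M$ is nonnegative, so $M \ge 0$.

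The one place that needs care --- and essentially the only real obstacle --- is the degenerate situation in the forward direction when $\langle A_1 u, u\rangle = 0$ or $\langle A_2 v, v\rangle = 0$, so that the optimization over $t$ has no interior minimizer. There I would instead let $t \to \infty$ (or $t \to 0$) in $t^2 \langle A_1 u, u\rangle + t^{-2}\langle A_2 v, v\rangle \ge 2|\langle A_3 u, v\rangle|$ to force $\langle A_3 u, v\rangle = 0$, so that the desired inequality holds trivially; everything else is routine.
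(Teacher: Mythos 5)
The paper offers no proof of this lemma at all --- it is imported verbatim from Kittaneh's paper \cite{kittaneh1988notes} --- so there is nothing internal to compare against; judged on its own, your argument is correct and is the standard one: identify positivity of the block operator with nonnegativity of the quadratic form $\langle A_1u,u\rangle+\langle A_2v,v\rangle+2\operatorname{Re}\langle A_3u,v\rangle$, then pass between that and the Cauchy--Schwarz-type bound by phase rotation and scaling in one direction and by AM--GM in the other, with the degenerate case $\langle A_1u,u\rangle=0$ or $\langle A_2v,v\rangle=0$ correctly disposed of by letting $t\to\infty$ or $t\to 0$. One small slip: in the ($\Leftarrow$) direction you should choose $\theta$ so that $e^{-i\theta}\langle A_3u,v\rangle=-|\langle A_3u,v\rangle|$ (i.e.\ the cross term is as \emph{negative} as possible); with $e^{-i\theta}\langle A_3u,v\rangle\ge 0$ as written, positivity of the quadratic form yields only the vacuous inequality $\langle A_1u,u\rangle+\langle A_2v,v\rangle+2|\langle A_3u,v\rangle|\ge 0$. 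The conclusion you draw from it is the right one, so this is a typo in the phase choice rather than a gap.
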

	
	\section{Main Result}
	In our first result, we derive the following upper bound of the Berezin number.
	\begin{theorem}
		Let $A\in  \mathscr{C} \mathscr{R(H)}$ and $\lambda\in [0,1]$. Then for any $r\ge 1$\label{assa}
		\begin{eqnarray*}
			\textbf{ber}^{2r}(A)&\le& \min\bigg\{ \bigg\|  \frac{\lambda}{2}\big(|A|^{4r}+(AA^{\dag})^r\big)+(1-\lambda)|A^*|^{2r}\bigg\|_{ber},\\
			&&\bigg\| \frac{\lambda}{2}\big(|A^*|^{4r}+(A^{\dag}A)^r\big)+(1-\lambda)|A|^{2r} \bigg\|_{ber}\bigg\}.
		\end{eqnarray*}
		
	\end{theorem}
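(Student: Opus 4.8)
The plan is to establish the two operator bounds inside the minimum separately; each follows from a pointwise estimate on a normalized reproducing kernel, after which one takes the supremum over $X$. Fix $x\in X$ and write $u=\hat k_x$, so that $\|u\|=1$ and $\widetilde A(x)=\langle Au,u\rangle$.

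For the first bound I would combine two independent estimates of $|\langle Au,u\rangle|^{2r}$ via a convex combination with weights $\lambda$ and $1-\lambda$. The first estimate starts from Lemma~\ref{bandor} with $v=u$, namely $|\langle Au,u\rangle|^2\le\langle|A|^2u,u\rangle\,\langle AA^{\dag}u,u\rangle$; raising both sides to the power $r$, applying the arithmetic--geometric mean inequality to $\langle|A|^2u,u\rangle^r\langle AA^{\dag}u,u\rangle^r$, and then moving the powers inside the inner products by Lemma~\ref{dog}(i) --- using that $AA^{\dag}$ is an orthogonal projection, so $(AA^{\dag})^{2r}=(AA^{\dag})^r$ --- yields $|\langle Au,u\rangle|^{2r}\le\tfrac12\big\langle\big(|A|^{4r}+(AA^{\dag})^r\big)u,u\big\rangle$. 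The second estimate is elementary: $|\langle Au,u\rangle|=|\langle u,A^*u\rangle|\le\|A^*u\|=\langle|A^*|^2u,u\rangle^{1/2}$, so raising to the power $2r$ and using Lemma~\ref{dog}(i) gives $|\langle Au,u\rangle|^{2r}\le\langle|A^*|^{2r}u,u\rangle$. Taking the $\lambda$, $1-\lambda$ convex combination of these two inequalities produces
\[
|\langle Au,u\rangle|^{2r}\le\Big\langle\Big[\tfrac{\lambda}{2}\big(|A|^{4r}+(AA^{\dag})^r\big)+(1-\lambda)|A^*|^{2r}\Big]u,u\Big\rangle;
\]
since the bracketed operator $B$ is positive, passing to $u=\hat k_x$ and taking the supremum over $x$ gives $\textbf{ber}^{2r}(A)\le\textbf{ber}(B)=\|B\|_{ber}$, which is the first term.

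For the second bound I would run the identical argument with $A^*$ in place of $A$ in the Moore--Penrose step. Here one records that $A^*\in\mathscr{CR}(\mathscr H)$, that $(A^*)^{\dag}=(A^{\dag})^*$, and hence $A^*(A^*)^{\dag}=(A^{\dag}A)^*=A^{\dag}A$ by the defining relation $(d)$; then Lemma~\ref{bandor} gives $|\langle Au,u\rangle|^2=|\langle A^*u,u\rangle|^2\le\langle|A^*|^2u,u\rangle\,\langle A^{\dag}Au,u\rangle$, and the same AM--GM / Lemma~\ref{dog}(i) chain produces $|\langle Au,u\rangle|^{2r}\le\tfrac12\langle(|A^*|^{4r}+(A^{\dag}A)^r)u,u\rangle$. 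Pairing this with $|\langle Au,u\rangle|\le\|Au\|=\langle|A|^2u,u\rangle^{1/2}$ (hence $|\langle Au,u\rangle|^{2r}\le\langle|A|^{2r}u,u\rangle$) in a convex combination and taking the supremum over $x$ gives the second term, and the minimum of the two is the assertion. The argument is essentially bookkeeping; the only points needing attention are keeping straight which factor in each splitting is supplied by the Moore--Penrose inequality and which by plain Cauchy--Schwarz, and the observation that $AA^{\dag}$ and $A^{\dag}A$ are projections, so all their positive powers coincide. I do not anticipate a genuinely hard step.
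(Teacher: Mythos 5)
Your argument is correct and follows essentially the same route as the paper: split $|\langle A\hat k_x,\hat k_x\rangle|^{2r}$ into a $\lambda$-part handled by Lemma~\ref{bandor}, AM--GM and Lemma~\ref{dog}, and a $(1-\lambda)$-part handled by Cauchy--Schwarz and Lemma~\ref{dog}, then take the supremum and repeat with $A^*$. Your explicit justification that $(AA^{\dag})^{2r}=(AA^{\dag})^r$ via the projection property, and that $A^*(A^*)^{\dag}=A^{\dag}A$, fills in details the paper leaves implicit.
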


	\begin{proof} Suppose that $\hat{k}_x$ be a normalized reproducing kernel of $ \mathscr{H}$. Then
		\begin{eqnarray*}
			&&|\langle A\hat{k}_x,\hat{k}_x\rangle|^{2r}  \\
			&=& \lambda |\langle A\hat{k}_x,\hat{k}_x\rangle|^{2r}+(1-\lambda)|\langle A\hat{k}_x,\hat{k}_x\rangle|^{2r} \\
			&=& \lambda |\langle A\hat{k}_x,\hat{k}_x\rangle|^{2r}+(1-\lambda)|\langle A^*\hat{k}_x,\hat{k}_x\rangle|^{2r} \\
			&\le& \lambda \bigg(\langle |A|^2\hat{k}_x,\hat{k}_x\rangle\langle AA^{\dag}\hat{k}_x,\hat{k}_x\rangle\bigg)^{r}+(1-\lambda)|\langle A^*\hat{k}_x,A^*\hat{k}_x\rangle|^{r}\,\Big(\mbox{ by Lemma \ref{bandor}}\Big)\\
			&\le& \frac{\lambda}{2}\bigg(\langle |A|^2\hat{k}_x,\hat{k}_x\rangle^{2r}+\langle AA^{\dag}\hat{k}_x,\hat{k}_x\rangle^{2r}\bigg)+(1-\lambda)|\langle A^*\hat{k}_x,A^*\hat{k}_x\rangle|^{r}\\
			&\le& \frac{\lambda}{2}\bigg(\langle |A|^{4r}\hat{k}_x,\hat{k}_x\rangle+\langle (AA^{\dag})^r\hat{k}_x,\hat{k}_x\rangle\bigg)+(1-\lambda)\langle |A^*|^{2r}\hat{k}_x,\hat{k}_x\rangle\\&&\,\,\,\,\,\,\,\,\,\,\,\,\,\,\,\,\,\,\,\,\,\,\,\,\,\,\,\,\,\,\,\,\,\,\,\,\,\,\,\,\,\,\,\,\,\,\,\,\,\,\,\,\,\,\,\,\,\,\,\,\,\,\,\,\,\,\,\,\,\,\,\,\,\,\,\,\,\,\,\,\,\,\,\,\,\,\,\,\,\,\,\,\,\,\,\,\,\,\,\,\,\,\,\,\,\,\,\,\,\,\,\,\,\,\,\,\,\,\,\,\,\,\,\,\,\,\,\,\,\,\,\,\,\,\,\,\,\,\,\Big(\mbox{ by Lemma \ref{dog}}\Big)\\
			&=& \big\langle\big( \frac{\lambda}{2}\big(|A|^{4r}+(AA^{\dag})^r\big)+(1-\lambda)|A^*|^{2r}\big) \hat{k}_x,\hat{k}_x\big\rangle\\
			&\le & \big\| \frac{\lambda}{2}\big(|A|^{4r}+(AA^{\dag})^r\big)+(1-\lambda)|A^*|^{2r}\big \|_{ber}.
		\end{eqnarray*}
		Now, taking supremum over all $x\in X$, we get 
		\begin{eqnarray}
			\textbf{ber}^{2r}(A)&\le&\big\|  \frac{\lambda}{2}\big(|A|^{4r}+(AA^{\dag})^r\big)+(1-\lambda)|A^*|^{2r}\big\|_{ber}.\label{asd11}
		\end{eqnarray}
		Replacing $A$ by $A^*$, we get
		\begin{eqnarray}
			\textbf{ber}^{2r}(A)&\le& \| \frac{\lambda}{2}\big(|A^*|^{4r}+(A^{\dag}A)^r\big)+(1-\lambda)|A|^{2r} \|_{ber}.\label{asd1}
		\end{eqnarray} Combining (\ref{asd11}) and (\ref{asd1}) we get the
		the required result.
	\end{proof}
	\begin{remark}
		In \cite[Th. 2.6]{bhunia2023development}, the following bound is derived, i.e,
		\begin{equation}
			\textbf{ber}^{2r}(A)\le \textbf{ber}\big( \lambda |A|^{2r}+(1-\lambda)|A^*|^{2r}\big).\label{assa2}
		\end{equation}

		If we take $ \mathscr{H}=\mathbb{C}^2$,  $A=\begin{bmatrix}
			\frac{ 1}{2} & \frac{ 1}{2} \\
			0 & 0 
		\end{bmatrix}$, $r=1$ and $\lambda=\frac{1}{2}$ then inequality (\ref{assa2}) gives $\textbf{ber}^2(A) \le 0.375$, whereas Theorem \ref{assa} gives $\textbf{ber}^2(A) \le 0.3125$. Therefore, The upper bound provided by  Theorem \ref{assa} is better  than  the inequality  (\ref{assa2}).
	\end{remark}
	
	The next result reads as:
	
	\begin{theorem}
		Let $A\in \mathscr{C} \mathscr{R}( \mathscr{H})$. Then for any $r\ge 1$\label{vbc}
		\[\textbf{ber}^{2r}(A)\le\frac{1}{4}\| |A|^r+|A^*|^r\|_{ber} \|(U|A|U)^r+(V|A^*|V)^r\|_{ber},\] where $U=A^{\dag}A$ and $V=AA^{\dag}$.
	\end{theorem}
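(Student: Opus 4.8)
The plan is to combine the mixed Schwarz inequality with the idempotent structure of $U=A^{\dag}A$ and $V=AA^{\dag}$, and then push the resulting estimate through Lemma \ref{dog} together with an arithmetic--geometric mean step.

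First I would record the facts that follow at once from the Moore--Penrose relations $(a)$--$(d)$: $U$ and $V$ are orthogonal projections, $AU=A=VA$, hence $A=VAU$, and $U|A|U$, $V|A^{*}|V$ are positive operators, so that $(U|A|U)^{r}$ and $(V|A^{*}|V)^{r}$ are meaningful for $r\ge1$ (in fact, since $A$ has closed range, $A^{\dag}A$ and $AA^{\dag}$ are the orthogonal projections onto $\mathrm{ran}(|A|)=\overline{\mathrm{ran}(A^{*})}$ and $\mathrm{ran}(|A^{*}|)=\overline{\mathrm{ran}(A)}$, whence $U|A|U=|A|$ and $V|A^{*}|V=|A^{*}|$; I keep the notation $U,V$ to match the statement). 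Now fix a normalized reproducing kernel $\hat{k}_x$. From Lemma \ref{nnnm},
\[
|\langle A\hat{k}_x,\hat{k}_x\rangle|^{2}\le\langle|A|\hat{k}_x,\hat{k}_x\rangle\,\langle|A^{*}|\hat{k}_x,\hat{k}_x\rangle .
\]
On the other hand, writing $\langle A\hat{k}_x,\hat{k}_x\rangle=\langle VAU\hat{k}_x,\hat{k}_x\rangle=\langle AU\hat{k}_x,V\hat{k}_x\rangle$ and applying the two--vector mixed Schwarz inequality $|\langle Au,v\rangle|^{2}\le\langle|A|u,u\rangle\langle|A^{*}|v,v\rangle$ (which is Lemma \ref{mm} applied to the well-known positive block matrix $\begin{bmatrix}|A|&A^{*}\\ A&|A^{*}|\end{bmatrix}$) with $u=U\hat{k}_x$ and $v=V\hat{k}_x$, I obtain the twisted bound
\[
|\langle A\hat{k}_x,\hat{k}_x\rangle|^{2}\le\langle U|A|U\hat{k}_x,\hat{k}_x\rangle\,\langle V|A^{*}|V\hat{k}_x,\hat{k}_x\rangle .
\]
Multiplying the two displayed inequalities gives a single estimate for $|\langle A\hat{k}_x,\hat{k}_x\rangle|^{4}$ as a product of four Berezin-transform factors.

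Next I would raise that estimate to the power $r$ and apply Lemma \ref{dog}$(i)$ to each of the four positive operators $|A|$, $|A^{*}|$, $U|A|U$, $V|A^{*}|V$ (legitimate since $r\ge1$ and $\|\hat{k}_x\|=1$), getting $\langle|A|\hat{k}_x,\hat{k}_x\rangle^{r}\le\langle|A|^{r}\hat{k}_x,\hat{k}_x\rangle$ and the three analogues. Then the arithmetic--geometric mean inequality (Lemma \ref{cat}$(i)$ with $\lambda=\mu=\tfrac12$), used once on the pair $\langle|A|^{r}\hat{k}_x,\hat{k}_x\rangle$, $\langle|A^{*}|^{r}\hat{k}_x,\hat{k}_x\rangle$ and once on the pair $\langle(U|A|U)^{r}\hat{k}_x,\hat{k}_x\rangle$, $\langle(V|A^{*}|V)^{r}\hat{k}_x,\hat{k}_x\rangle$, yields
\[
|\langle A\hat{k}_x,\hat{k}_x\rangle|^{4r}\le\frac{1}{16}\big\langle(|A|^{r}+|A^{*}|^{r})\hat{k}_x,\hat{k}_x\big\rangle^{2}\big\langle((U|A|U)^{r}+(V|A^{*}|V)^{r})\hat{k}_x,\hat{k}_x\big\rangle^{2}.
\]
Taking square roots, bounding the Berezin transforms of the positive operators $|A|^{r}+|A^{*}|^{r}$ and $(U|A|U)^{r}+(V|A^{*}|V)^{r}$ by their respective Berezin norms (recall $\textbf{ber}(\cdot)=\|\cdot\|_{ber}$ on positive operators), and taking the supremum over $x\in X$ gives the asserted inequality.

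I expect the only delicate point to be the twisted mixed Schwarz bound in the second display — this is where the Moore--Penrose inverse genuinely enters, through $A=VAU$ (equivalently, through $U|A|U=|A|$ and $V|A^{*}|V=|A^{*}|$). Everything afterwards — taking powers, Lemma \ref{dog}, the AM--GM step, and passing to the supremum — is routine and mirrors the argument already used for Theorem \ref{assa}.
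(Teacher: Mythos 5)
Your argument is correct and follows essentially the same route as the paper: the paper likewise derives two mixed-Schwarz bounds twisted by the projections $U$ and $V$ (using $A=AU$ and $A=VA$ rather than your single $A=VAU$ together with the plain mixed Schwarz, but the product of the two bounds is the same four factors), and then combines AM--GM, Lemma \ref{dog} and the supremum exactly as you do, merely in the opposite order. Your justification of the two-vector mixed Schwarz inequality via the positive block matrix $\begin{bmatrix}|A|&A^{*}\\ A&|A^{*}|\end{bmatrix}$ and Lemma \ref{mm} is, if anything, more careful than the paper's bare citation of Lemma \ref{nnnm}.
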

	\begin{proof}   Let $\hat{k}_x$ be a normalized reproducing kernel of $ \mathscr{H}$.
		Since $A=AU$ then
		from Lemma \ref{nnnm} we get \begin{eqnarray*}
			|\langle A\hat{k}_x,\hat{k}_x\rangle|^2= |\langle AU\hat{k}_x,\hat{k}_x\rangle|^2
			\le \langle U|A|U\hat{k}_x, \hat{k}_x\rangle \langle |A^*|\hat{k}_x,\hat{k}_x \rangle.
		\end{eqnarray*}
		Similarly,
		\[ |\langle A\hat{k}_x,\hat{k}_x\rangle|^2\le \langle V|A^*|V\hat{k}_x, \hat{k}_x\rangle \langle |A|\hat{k}_x,\hat{k}_x \rangle. \]
		Therefore, \begin{eqnarray*}
			&& |\langle A\hat{k}_x,\hat{k}_x \rangle|^{2r}\\
			&\le & \big(\langle U|A|U\hat{k}_x, \hat{k}_x\rangle \langle |A^*|\hat{k}_x,\hat{k}_x \rangle\big)^{\frac{r}{2}}\big(  \langle V|A^*|V\hat{k}_x, \hat{k}_x\rangle \langle |A|\hat{k}_x,\hat{k}_x \rangle  \big)^{\frac{r}{2}}\\
			&\le & \frac{1}{4}\big(\langle |A|\hat{k}_x,\hat{k}_x \rangle^r + \langle |A^*|\hat{k}_x,\hat{k}_x \rangle^r\big)\big(  \langle V|A^*|V\hat{k}_x, \hat{k}_x\rangle^r + \langle U|A|U\hat{k}_x, \hat{k}_x\rangle^r\big )\\
			&\le & \frac{1}{4}\big(\langle |A|^r\hat{k}_x,\hat{k}_x \rangle + \langle |A^*|^r\hat{k}_x,\hat{k}_x \rangle\big)\big(  \langle( V|A^*|V)^r\hat{k}_x, \hat{k}_x\rangle + \langle (U|A|U)^r\hat{k}_x, \hat{k}_x\rangle\big )\\
			&&\,\,\,\,\,\,\,\,\,\,\,\,\,\,\,\,\,\,\,\,\,\,\,\,\,\,\,\,\,\,\,\,\,\,\,\,\,\,\,\,\,\,\,\,\,\,\,\,\,\,\,\,\,\,\,\,\,\,\,\,\,\,\,\,\,\,\,\,\,\,\,\,\,\,\,\,\,\,\,\,\,\,\,\,\,\,\,\,\,\,\,\,\,\,\,\,\,\,\,\,\,\,\,\,\,\,\,\,\,\,\,\,\,\,\,\,\,\,\,\,\,\,\,\,\,\,\,\,\,\,\,\,\,\,\,\,\,\,\,\,\,\,\Big(\mbox{ by Lemma \ref{dog}}\Big)\\
			&\le& \frac{1}{4}\big\| |A|^r+|A^*|^r\|_{ber} \|(U|A|U)^r+(V|A^*|V)^r\big\|_{ber}.
		\end{eqnarray*}
		Therefore, we obtain the required inequality by taking supremum over all $x\in X$.
	\end{proof}
	
	\begin{remark}
		In particular, if $A$ is invertible, then \cite[Th. 2.2]{guesba2025further} becomes
		\begin{eqnarray}
			\textbf{ber}(A)&\le& \frac{1}{2}\textbf{ber}(|A|^2+I)\nonumber\\
			&=& \frac{1}{2}\||A|^2+I\|_{ber}\label{vbc1}.
		\end{eqnarray}
		For $r=1$ Theorem \ref{vbc} becomes \begin{eqnarray}
			\textbf{ber}(A)
			&\le& \frac{1}{2}\||A|+|A^*|\|_{ber}\label{vbc2}.
		\end{eqnarray}
		Now,  if we take $ \mathscr{H}=\mathbb{C}^2$ and  $A=\begin{bmatrix}
			2 & 0\\
			0 & 1
		\end{bmatrix}$ then inequality (\ref{vbc1}) gives $\textbf{ber}(A)\le 2.5$,  while the inequality (\ref{vbc2}) gives $\textbf{ber}(A)\le 2$. Therefore, inequality (\ref{vbc2}) gives a better bound than inequality (\ref{vbc1}). 
	\end{remark}
	
	The following result gives another estimate for the Berezin number.
	
	\begin{theorem}
		Let $A\in  \mathscr{CR}( \mathscr{H})$ and $\lambda\in[0,1] $. Then for any $r\ge 1$ \label{th10}\[
		\textbf{ber}^{2r}(A)\le \frac{\lambda}{2}\||A|^{4r}+(AA^{\dag})^r\|_{ber}+\frac{(1-\lambda)}{2}\textbf{ber}^r(A)\||A|^{2r}+(AA^{\dag})^r\|_{ber}.
		\]
	\end{theorem}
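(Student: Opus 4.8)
The plan is to follow the template of the proof of Theorem \ref{assa}, but to break the power $2r$ asymmetrically so that one factor of $\textbf{ber}^r(A)$ can be extracted from the second half. Fix a normalized reproducing kernel $\hat k_x$ and write
$$|\langle A\hat k_x,\hat k_x\rangle|^{2r}=\lambda|\langle A\hat k_x,\hat k_x\rangle|^{2r}+(1-\lambda)|\langle A\hat k_x,\hat k_x\rangle|^{2r}.$$
For the first summand I would proceed exactly as in Theorem \ref{assa}: apply Lemma \ref{bandor} with $u=v=\hat k_x$ to get $|\langle A\hat k_x,\hat k_x\rangle|^{2r}\le\big(\langle|A|^2\hat k_x,\hat k_x\rangle\,\langle AA^{\dag}\hat k_x,\hat k_x\rangle\big)^{r}$, then use the scalar inequality $ab\le\tfrac12(a^2+b^2)$ (i.e.\ Lemma \ref{cat}(i) with $\lambda=\mu=\tfrac12$) on $a=\langle|A|^2\hat k_x,\hat k_x\rangle^{r}$, $b=\langle AA^{\dag}\hat k_x,\hat k_x\rangle^{r}$, and finally Lemma \ref{dog}(i) together with the fact that $AA^{\dag}$ is an orthogonal projection (so $(AA^{\dag})^{2r}=(AA^{\dag})^{r}$). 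This yields $\lambda|\langle A\hat k_x,\hat k_x\rangle|^{2r}\le\tfrac{\lambda}{2}\big\langle\big(|A|^{4r}+(AA^{\dag})^{r}\big)\hat k_x,\hat k_x\big\rangle$.

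The genuinely new step is the treatment of the second summand. Here I would write $|\langle A\hat k_x,\hat k_x\rangle|^{2r}=|\langle A\hat k_x,\hat k_x\rangle|^{r}\cdot|\langle A\hat k_x,\hat k_x\rangle|^{r}$, estimate the first factor crudely by $\textbf{ber}^{r}(A)$, and apply Lemma \ref{bandor} to the second factor in the form $|\langle A\hat k_x,\hat k_x\rangle|^{r}\le\langle|A|^2\hat k_x,\hat k_x\rangle^{r/2}\langle AA^{\dag}\hat k_x,\hat k_x\rangle^{r/2}$. A further use of AM--GM (with exponent $r/2$) and of Lemma \ref{dog}(i)/the projection identity gives $|\langle A\hat k_x,\hat k_x\rangle|^{r}\le\tfrac12\big\langle\big(|A|^{2r}+(AA^{\dag})^{r}\big)\hat k_x,\hat k_x\big\rangle$, hence $(1-\lambda)|\langle A\hat k_x,\hat k_x\rangle|^{2r}\le\tfrac{1-\lambda}{2}\,\textbf{ber}^{r}(A)\big\langle\big(|A|^{2r}+(AA^{\dag})^{r}\big)\hat k_x,\hat k_x\big\rangle$.

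Adding the two bounds, taking the supremum over $x\in X$, and using the subadditivity of the supremum separates the right-hand side into two pieces, each of the form $\sup_{x}\langle B\hat k_x,\hat k_x\rangle$ with $B=|A|^{4r}+(AA^{\dag})^{r}$ or $B=|A|^{2r}+(AA^{\dag})^{r}$ positive. Since for a positive operator one has $\sup_{x}\langle B\hat k_x,\hat k_x\rangle=\textbf{ber}(B)=\|B\|_{ber}$ (as recalled in the introduction), this produces precisely the asserted inequality. I do not expect a real obstacle; the only points needing care are the bookkeeping with the exponent $r/2$ in the second half (checking that AM--GM and Lemma \ref{dog}(i) are invoked with parameters $\ge1$ wherever required) and the repeated use of the fact that $AA^{\dag}$ is an orthogonal projection, which collapses all of its powers to $(AA^{\dag})^{r}$.
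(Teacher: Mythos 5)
Your proposal is correct and follows essentially the same route as the paper: the same $\lambda/(1-\lambda)$ split, Lemma \ref{bandor} applied symmetrically to the first summand and to only one factor $|\langle A\hat k_x,\hat k_x\rangle|^{r}$ of the second, then AM--GM, Lemma \ref{dog}, and the bound $|\langle A\hat k_x,\hat k_x\rangle|^{r}\le\textbf{ber}^{r}(A)$ before taking the supremum. The only cosmetic difference is that the paper keeps the factor $|\langle A\hat k_x,\hat k_x\rangle|^{r}$ explicit until the final line rather than replacing it by $\textbf{ber}^{r}(A)$ early, which changes nothing.
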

	\begin{proof} Let $\hat{k}_x$ be a normalized reproducing kernel of $ \mathscr{H}$. Then  we get
		\begin{eqnarray*}
			&& |\langle A\hat{k}_x,\hat{k}_x \rangle|^{2r}\\
			&=&\lambda|\langle A\hat{k}_x,\hat{k}_x \rangle|^{2r} +(1-\lambda) |\langle A\hat{k}_x,\hat{k}_x \rangle|^{2r}\\
			&\le & \lambda \langle |A|^2\hat{k}_x,\hat{k}_x\rangle^r \langle AA^{\dag}\hat{k}_x,\hat{k}_x\rangle^r\\
			&&\,\,\,\,\,\,+(1-\lambda)|\langle A\hat{k}_x,\hat{k}_x\rangle|^r\langle |A|^2\hat{k}_x,\hat{k}_x\rangle^{\frac{r}{2}}\langle AA^{\dag}\hat{k}_x,\hat{k}_x\rangle^{\frac{r}{2}}\,\,\,\,\,\,\,\,\,\Big(\mbox{by Lemma \ref{bandor}}\Big)\\
			&\le & \frac{\lambda}{2}\bigg( \langle |A|^2\hat{k}_x,\hat{k}_x\rangle^{2r}+ \langle AA^{\dag}\hat{k}_x,\hat{k}_x\rangle^{2r}\bigg)\\
			&&\,\,\,\,\,\,\,\,+\frac{(1-\lambda)}{2}|\langle A\hat{k}_x,\hat{k}_x\rangle|^r\bigg(\langle |A|^2\hat{k}_x,\hat{k}_x\rangle^r+\langle AA^{\dag}\hat{k}_x,\hat{k}_x\rangle^r\bigg)\\
			&\le & \frac{\lambda}{2}\big \langle\big (|A|^{4r}+(AA^{\dag})^r\big)\hat{k}_x,\hat{k}_x\big\rangle\\
			&&\,\,\,\,\,\,\,\,+\frac{(1-\lambda)}{2}|\langle A\hat{k}_x,\hat{k}_x\rangle|^r\big\langle \big(|A|^{2r}+(AA^{\dag})^r\big)\hat{k}_x,\hat{k}_x\big\rangle\,\,\,\,\,\,\,\,\,\Big(\mbox{by Lemma \ref{dog}}\Big)\\
			&\le& \frac{\lambda}{2}\big\||A|^{4r}+(AA^{\dag})^r\big\|_{ber}+\frac{(1-\lambda)}{2}\textbf{ber}^r(A)\big\||A|^{2r}+(AA^{\dag})^r\big\|_{ber}.
		\end{eqnarray*}
		Therefore, the desired inequality is obtained by taking supremum over all $x\in X$.
	\end{proof}
	\begin{remark}
		In \cite[Cor. 2.10]{sen2025berezin}, it was given that \begin{eqnarray}
			\textbf{ber}^4(A)\le \frac{1}{4}\||A|^4+|A^*|^4\|_{ber}+\frac{1}{4}\|A^2\|^2_{ber}.\label{gfd}\end{eqnarray} If we take $ \mathscr{H}=\mathbb{C}^2$ and $A=\begin{bmatrix}
			1 & 1 & 0\\
			0 & 0 & 0\\
			1 & 1 & 0
		\end{bmatrix}$ then the bound in (\ref{gfd}) gives $\textbf{ber}^4(A)\le 4.25 $, whereas Theorem  \ref{th10} gives $\textbf{ber}^4(A)\le \frac{1}{4}\big(6\lambda +2.5\big)^2 $ for $r=1$. Therefore, for any $\lambda\le \frac{1}{4}$ the inequality in Theorem  \ref{th10}  gives better upper bound than the inequality in (\ref{gfd}).
	\end{remark}
	
	We now observe the following scalar inequality
	\begin{equation}
		|\alpha+\beta|\le \sqrt{2}|\alpha+i\beta|,\,\,\,where\,\, \alpha,\beta\in\mathbb{R}.\label{goru}
	\end{equation}
	The following upper bound of the Berezin number for the sum of two operators is obtained using this inequality.
	
	\begin{theorem}
		Let $A_1,A_2\in  \mathscr{CR}( \mathscr{H})$. Then \label{nnbb}
		\[\textbf{ber}(A_1+A_2)\le \frac{1}{\sqrt{2}}\textbf{ber}\bigg(|A_1|^2+|A_2|^2+i\big(A_1A^{\dag}_1+A_2A^{\dag}_2\big)\bigg).\]
	\end{theorem}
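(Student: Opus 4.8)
The plan is to estimate $|\langle (A_1+A_2)\hat{k}_x,\hat{k}_x\rangle|$ for an arbitrary normalized reproducing kernel $\hat{k}_x$ of $\mathscr{H}$, and then take the supremum over $x\in X$. First I would use the triangle inequality to split
\[
|\langle (A_1+A_2)\hat{k}_x,\hat{k}_x\rangle| \le |\langle A_1\hat{k}_x,\hat{k}_x\rangle| + |\langle A_2\hat{k}_x,\hat{k}_x\rangle|.
\]
Applying Lemma \ref{bandor} with $u=v=\hat{k}_x$ to each summand gives $|\langle A_j\hat{k}_x,\hat{k}_x\rangle|^2 \le \langle |A_j|^2\hat{k}_x,\hat{k}_x\rangle\,\langle A_jA_j^{\dag}\hat{k}_x,\hat{k}_x\rangle$ for $j=1,2$. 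Since $|A_j|^2\ge 0$ and $A_jA_j^{\dag}$ is a self-adjoint idempotent (by the Moore--Penrose relations $(a)$ and $(c)$), hence positive, both inner products on the right are nonnegative reals; taking square roots and invoking the AM--GM inequality (Lemma \ref{cat}(i) with $r=1$, $\lambda=\mu=\tfrac12$) yields
\[
|\langle A_j\hat{k}_x,\hat{k}_x\rangle| \le \frac{1}{2}\big(\langle |A_j|^2\hat{k}_x,\hat{k}_x\rangle + \langle A_jA_j^{\dag}\hat{k}_x,\hat{k}_x\rangle\big).
\]

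Next I would add these two estimates and regroup into
\[
|\langle (A_1+A_2)\hat{k}_x,\hat{k}_x\rangle| \le \frac{1}{2}\big(P(x) + Q(x)\big),
\]
where $P(x)=\langle (|A_1|^2+|A_2|^2)\hat{k}_x,\hat{k}_x\rangle$ and $Q(x)=\langle (A_1A_1^{\dag}+A_2A_2^{\dag})\hat{k}_x,\hat{k}_x\rangle$ are nonnegative real numbers. Applying the scalar inequality (\ref{goru}) with $\alpha=P(x)$ and $\beta=Q(x)$ gives $P(x)+Q(x)\le\sqrt{2}\,|P(x)+iQ(x)|$. Because $|A_1|^2+|A_2|^2$ and $A_1A_1^{\dag}+A_2A_2^{\dag}$ are self-adjoint, $P(x)$ and $Q(x)$ are exactly the real and imaginary parts of $\big\langle\big(|A_1|^2+|A_2|^2+i(A_1A_1^{\dag}+A_2A_2^{\dag})\big)\hat{k}_x,\hat{k}_x\big\rangle$, so
\[
|\langle (A_1+A_2)\hat{k}_x,\hat{k}_x\rangle| \le \frac{1}{\sqrt{2}}\,\Big|\big\langle\big(|A_1|^2+|A_2|^2+i(A_1A_1^{\dag}+A_2A_2^{\dag})\big)\hat{k}_x,\hat{k}_x\big\rangle\Big| \le \frac{1}{\sqrt{2}}\,\textbf{ber}\big(|A_1|^2+|A_2|^2+i(A_1A_1^{\dag}+A_2A_2^{\dag})\big).
\]
Taking the supremum over $x\in X$ then yields the claimed bound.

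The only delicate point is the step identifying $P(x)$ and $Q(x)$ with the real and imaginary parts of a single inner product: this requires $A_jA_j^{\dag}$ to be positive (so that $Q(x)\in\mathbb{R}_{\ge 0}$), which is precisely what also makes the real-scalar inequality (\ref{goru}) applicable. Everything else is a routine composition of the triangle inequality, Lemma \ref{bandor}, AM--GM, and the definition of $\textbf{ber}(\cdot)$, so I do not anticipate any serious obstacle.
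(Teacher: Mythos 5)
Your proposal is correct and follows essentially the same route as the paper's own proof: triangle inequality, Lemma \ref{bandor} applied to each summand, AM--GM, regrouping into the real quantities $P(x)$ and $Q(x)$, the scalar inequality (\ref{goru}), and the final identification with a single Berezin transform before taking the supremum. The only difference is cosmetic (you make explicit the positivity of $A_jA_j^{\dag}$ that justifies reading $P(x)$ and $Q(x)$ as real and imaginary parts, which the paper leaves implicit).
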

	\begin{proof}
		Let $\hat{k}_x$ be a normalized reproducing kernel of $ \mathscr{H}$. Then we get
		\begin{eqnarray*}
			&&|\langle (A_1+A_2)\hat{k}_x,\hat{k}_x\rangle|\\&\le& |\langle A_1\hat{k}_x,\hat{k}_x \rangle|+|\langle A_2 \hat{k}_x,\hat{k}_x \rangle|\\
			&\le & \langle |A_1|^2\hat{k}_x,\hat{k}_x \rangle^{\frac{1}{2}}\langle A_1A^{\dag}_1\hat{k}_x,\hat{k}_x \rangle^{\frac{1}{2}}+\langle |A_2|^2\hat{k}_x,\hat{k}_x \rangle^{\frac{1}{2}}\langle A_2A^{\dag}_2\hat{k}_x,\hat{k}_x\rangle^{\frac{1}{2}}\\
			&&\,\,\,\,\,\,\,\,\,\,\,\,\,\,\,\,\,\,\,\,\,\,\,\,\,\,\,\,\,\,\,\,\,\,\,\,\,\,\,\,\,\,\,\,\,\,\,\,\,\,\,\,\,\,\,\,\,\,\,\,\,\,\,\,\,\,\,\,\,\,\,\,\,\,\,\,\,\,\,\,\,\,\,\,\,\,\,\,\,\,\,\,\,\,\,\,\,\,\,\,\,\,\,\,\,\,\,\,\,\,\,\,\,\,\,\,\,\,\,\,\,\,\,\,\,\,\,\,\,\,\,\,\big(\mbox{by Lemma \ref{bandor}}\big)\\
			&\le & \frac{1}{2}\bigg(\langle |A_1|^2\hat{k}_x,\hat{k}_x \rangle+\langle A_1A^{\dag}_1\hat{k}_x,\hat{k}_x \rangle\bigg)+\frac{1}{2} \bigg(\langle |A_2|^2\hat{k}_x,\hat{k}_x \rangle+\langle A_2A^{\dag}_2\hat{k}_x,\hat{k}_x \rangle\bigg)\\
			&=& \big\langle\big(\frac{|A_1|^2+|A_2|^2}{2}\big)\hat{k}_x,\hat{k}_x\big\rangle+\big\langle\big(\frac{A_1A^{\dag}_1+A_2A^{\dag}_2}{2}\big)\hat{k}_x,\hat{k}_x\big\rangle\\
			&\le & \sqrt{2}\bigg|\big\langle\big(\frac{|A_1|^2+|A_2|^2}{2}\big)\hat{k}_x,\hat{k}_x\big\rangle+i\big\langle\big(\frac{A_1A^{\dag}_1+A_2A^{\dag}_2}{2}\big)\hat{k}_x,\hat{k}_x\big\rangle\bigg|\,\,\,\,\,\,\,\,\Big(\mbox{ by (\ref{goru} )}\Big)\\
			&=&\sqrt{2}\bigg|\big\langle\big(\frac{|A_1|^2+|A_2|^2}{2}+i \frac{A_1A^{\dag}_1+A_2A^{\dag}_2}{2}\big)\hat{k}_x,\hat{k}_x\big\rangle\bigg|\\
			&\le& \frac{1}{\sqrt{2}}\textbf{ber}\bigg(|A_1|^2+|A_2|^2+i\big(A_1A^{\dag}_1+A_2A^{\dag}_2\big)\bigg).
		\end{eqnarray*}
		So, taking supremum over all $x\in X$, we get the required inequality.
	\end{proof}
	\begin{remark}
		Following \cite[Th. 3.3]{huban2022some}, for the case $\epsilon=1$ and $\zeta=\frac{1}{2}$, we get\begin{equation}
			\textbf{ber}^2(A)\le \frac{1}{2}\||A|^2+|A^*|^2\|_{ber}.\label{xxcc}
		\end{equation}
		
		If $A_2=0$ and $A_1=A$ then it follows from Theorem \ref{nnbb}
		that     \[
		\textbf{ber}(A)\le \frac{1}{\sqrt{2}}\textbf{ber}\big(|A|^2+iAA^{\dag}\big).
		\]
		If we take $ \mathscr{H}=\mathbb{C}^2$ and  $A=\begin{bmatrix}
			1 & 1\\
			0 & 0
		\end{bmatrix}$, then the inequality (\ref{xxcc}) gives $\textbf{ber}^2(A)\le 1.5,$ while Theorem \ref{nnbb} gives $\textbf{ber}^2(A)\le 1$ Therefore, we remark that Theorem \ref{nnbb} provides a stronger bound than in the inequality (\ref{xxcc}).
	\end{remark}
	
	Now, we note the following scalar inequality
	\begin{eqnarray}
		(\alpha_1\beta_1+\alpha_2\beta_2)^2\le(\alpha^2_1+\alpha^2_2)(\beta^2_1+\beta^2_2)\,\,\,\,\forall\,\,\alpha_1,\alpha_2,\beta_1,\beta_2\in \mathbb{R}\label{sree}
	\end{eqnarray}
	The following  upper bound of the Berezin number for the sum of two operators is obtained using this inequality.

	\begin{theorem}\label{T1}
		Let $A_1,A_2\in \mathscr{CR(H)}$. Then
		\begin{eqnarray*}
			\textbf{ber}^2(A_1+A_2) 
			&\le&\textbf{ber}(|A_1|^2+iA_2A^{\dag}_2)\textbf{ber}(|A_2|^2+iA_1A^{\dag}_1)\\&&\,\,\,\,\,\,\,\,\,\,\,\,\,\,\,\,\,+\frac{1}{2}\||A_1|^2+ A_2A^{\dag}_2\|_{ber}\||A_2|^2+A_1A^{\dag}_1\|_{ber}.\end{eqnarray*}
	\end{theorem}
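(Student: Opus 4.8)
\emph{Proof proposal.} The plan is the same template as in the preceding theorems: fix an arbitrary normalized reproducing kernel $\hat{k}_x$, bound $|\langle (A_1+A_2)\hat{k}_x,\hat{k}_x\rangle|^2$ pointwise by the quantity on the right, and take the supremum over $x\in X$ at the very end. First I would apply the triangle inequality and then Lemma \ref{bandor} to each summand. Writing $a_i=\langle|A_i|^2\hat{k}_x,\hat{k}_x\rangle$ and $b_i=\langle A_iA_i^{\dag}\hat{k}_x,\hat{k}_x\rangle$ (both nonnegative reals, since $|A_i|^2$ and $A_iA_i^{\dag}$ are positive), Lemma \ref{bandor} with $u=v=\hat{k}_x$ gives $|\langle A_i\hat{k}_x,\hat{k}_x\rangle|\le\sqrt{a_ib_i}$, hence $|\langle (A_1+A_2)\hat{k}_x,\hat{k}_x\rangle|\le\sqrt{a_1b_1}+\sqrt{a_2b_2}$.

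The key step is to square this estimate and invoke the scalar inequality \eqref{sree} with the \emph{cross pairing} $\alpha_1=\sqrt{a_1},\ \beta_1=\sqrt{b_1},\ \alpha_2=\sqrt{b_2},\ \beta_2=\sqrt{a_2}$, which yields $|\langle (A_1+A_2)\hat{k}_x,\hat{k}_x\rangle|^2\le (a_1+b_2)(b_1+a_2)$. I expect this to be the main point of the argument: it is exactly the pairing of $|A_1|^2$ with $A_2A_2^{\dag}$ (and of $|A_2|^2$ with $A_1A_1^{\dag}$) that forces the operators $|A_1|^2+iA_2A_2^{\dag}$ and $|A_2|^2+iA_1A_1^{\dag}$ to appear in the statement; the naive ``diagonal'' pairing would produce a different and presumably weaker bound.

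Finally I would split $(a_1+b_2)(b_1+a_2)=\tfrac12(a_1+b_2)(b_1+a_2)+\tfrac12(a_1+b_2)(b_1+a_2)$ and route the two halves differently. For one half, apply \eqref{goru} in the form $a_1+b_2\le\sqrt{2}\,\sqrt{a_1^2+b_2^2}$ and $b_1+a_2\le\sqrt{2}\,\sqrt{b_1^2+a_2^2}$; since $a_i,b_i$ are real, $\sqrt{a_1^2+b_2^2}=|\langle(|A_1|^2+iA_2A_2^{\dag})\hat{k}_x,\hat{k}_x\rangle|\le\textbf{ber}(|A_1|^2+iA_2A_2^{\dag})$ and likewise $\sqrt{b_1^2+a_2^2}\le\textbf{ber}(|A_2|^2+iA_1A_1^{\dag})$, so this half is at most $\textbf{ber}(|A_1|^2+iA_2A_2^{\dag})\,\textbf{ber}(|A_2|^2+iA_1A_1^{\dag})$. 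For the other half, note that $|A_1|^2+A_2A_2^{\dag}$ and $|A_2|^2+A_1A_1^{\dag}$ are positive operators, so $a_1+b_2=\langle(|A_1|^2+A_2A_2^{\dag})\hat{k}_x,\hat{k}_x\rangle\le\||A_1|^2+A_2A_2^{\dag}\|_{ber}$ (using that $\|\cdot\|_{ber}=\textbf{ber}$ on positive operators) and similarly for the second factor, bounding this half by $\tfrac12\||A_1|^2+A_2A_2^{\dag}\|_{ber}\,\||A_2|^2+A_1A_1^{\dag}\|_{ber}$. Adding the two estimates and taking the supremum over $x\in X$ gives the claim. Apart from choosing the cross pairing correctly, the only care needed is to track which inner products are real so that the moduli $\sqrt{\alpha^2+\beta^2}$ are legitimate; no further obstacle is anticipated.
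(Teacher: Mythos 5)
Your proposal is correct and proves the theorem, but it organizes the estimate differently from the paper, so a brief comparison is worthwhile. The shared toolbox is the same: the triangle inequality, Lemma \ref{bandor} with $u=v=\hat{k}_x$, the cross pairing in \eqref{sree}, and the identification of $\sqrt{a^2+b^2}$ with $|\langle (P+iQ)\hat{k}_x,\hat{k}_x\rangle|$ for positive $P,Q$. The paper, however, first expands $\big(|\langle A_1\hat{k}_x,\hat{k}_x\rangle|+|\langle A_2\hat{k}_x,\hat{k}_x\rangle|\big)^2$ into a diagonal part and a cross part (bounding the cross term by $\tfrac12(\cdot+\cdot)^2$), and only then applies \eqref{sree} with the cross pairing separately to each piece: the diagonal part $a_1b_1+a_2b_2$ becomes $\big((a_1^2+b_2^2)(b_1^2+a_2^2)\big)^{1/2}$, which is read off directly as the product of the two moduli $|\langle(|A_1|^2+iA_2A_2^{\dag})\hat{k}_x,\hat{k}_x\rangle|\,|\langle(|A_2|^2+iA_1A_1^{\dag})\hat{k}_x,\hat{k}_x\rangle|$ without ever invoking \eqref{goru}, while the cross part becomes $\tfrac12(a_1+b_2)(b_1+a_2)$. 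You instead compress the whole square into the single product $(a_1+b_2)(b_1+a_2)$ via one application of \eqref{sree}, and then split it $\tfrac12+\tfrac12$, spending \eqref{goru} on one half to manufacture the $\textbf{ber}(\,\cdot+i\,\cdot\,)$ factors. Both routes are valid and of comparable length; yours costs one extra inequality (\eqref{goru}) that the paper avoids, but it buys a strictly sharper intermediate pointwise estimate. Indeed, your bound $|\langle(A_1+A_2)\hat{k}_x,\hat{k}_x\rangle|^2\le(a_1+b_2)(b_1+a_2)$ already gives $\textbf{ber}^2(A_1+A_2)\le\||A_1|^2+A_2A_2^{\dag}\|_{ber}\,\||A_2|^2+A_1A_1^{\dag}\|_{ber}$ before you deliberately relax it to match the stated right-hand side (which dominates this single product, since $\textbf{ber}(P+iQ)\ge\tfrac{1}{\sqrt2}\|P+Q\|_{ber}$ for positive $P,Q$); this is worth noting as a by-product of your argument.
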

	\begin{proof}   Suppose that $\hat{k}_x$ be a normalized reproducing kernel of $ \mathscr{H}$. Then
		\begin{eqnarray*}
			&&|\langle (A_1+A_2)\hat{k}_x,\hat{k}_x\rangle|^2\\
			&\le& |\langle A_1\hat{k}_x,\hat{k}_x \rangle|^2+|\langle A_2\hat{k}_x,\hat{k}_x \rangle|^2+2|\langle A_1\hat{k}_x,\hat{k}_x\rangle||\langle A_2 \hat{k}_x,\hat{k}_x\rangle|\\
			&\le&  |\langle A_1\hat{k}_x,\hat{k}_x \rangle|^2+|\langle A_2\hat{k}_x,\hat{k}_x \rangle|^2+\frac{1}{2}\bigg(|\langle A_1\hat{k}_x,\hat{k}_x\rangle|+|\langle A_2\hat{k}_x,\hat{k}_x\rangle|\bigg)^2\\
			&\le& \langle |A_1|^2\hat{k}_x,\hat{k}_x\rangle \langle A_1A^{\dag}_1\hat{k}_x,\hat{k}_x\rangle+\langle |A_2|^2\hat{k}_x,\hat{k}_x\rangle \langle A_2A^{\dag}_2\hat{k}_x,\hat{k}_x\rangle\\
			&&+\frac{1}{2}\bigg(\langle |A_1|^2\hat{k}_x,\hat{k}_x\rangle ^{\frac{1}{2}}\langle A_1A^{\dag}_1\hat{k}_x,\hat{k}_x\rangle^{\frac{1}{2}}+\langle |A_2|^2\hat{k}_x,\hat{k}_x\rangle ^{\frac{1}{2}}\langle A_2A^{\dag}_2\hat{k}_x,\hat{k}_x\rangle^{\frac{1}{2}}\bigg)^2\\
			&& \,\,\,\,\,\,\,\,\, \,\,\,\,\,\,\,\,\, \,\,\,\,\,\,\,\,\, \,\,\,\,\,\,\,\,\,\,\, \,\,\,\,\,\,\,\,\, \,\,\,\,\,\,\,\,\, \,\,\,\,\,\,\,\,\,\,\,\,\,\,\, \,\,\,\,\,\,\,\,\,\,\,\,   \,\,\,\,\,\,\,\,\,\,\,\,\, \,\,\,\,\,\,\,\,\,  \,\,\,\,\,\,\,\,\,\,\,\,\,\,\,\,\,\,\,\,\, \,\,\,\,\,\,\,\,\, \,\,\,\,\,\,\,\,\,\Big(\mbox{ by Lemma \ref{bandor}}\Big)\\
			&\le&\bigg( \langle |A_1|^2\hat{k}_x,\hat{k}_x\rangle ^2+\langle A_2A^{\dag}_2\hat{k}_x,\hat{k}_x\rangle^2\bigg)^{\frac{1}{2}}\bigg(\langle |A_2|^2\hat{k}_x,\hat{k}_x\rangle^2 \langle A_1A^{\dag}_1\hat{k}_x,\hat{k}_x\rangle^2\bigg)^{\frac{1}{2}}\\
			&&+\frac{1}{2}\big(\langle |A_1|^2\hat{k}_x,\hat{k}_x\rangle +\langle A_2A^{\dag}_2\hat{k}_x,\hat{k}_x\rangle\big)\big(\langle |A_2|^2\hat{k}_x,\hat{k}_x\rangle +\langle A_1A^{\dag}_1\hat{k}_x,\hat{k}_x\rangle\big)\,\, \Big(\mbox{by (\ref{sree})}\Big)\\
			&=& \big|\langle (|A_1|^2+iA_2A^{\dag}_2)\hat{k}_x,\hat{k}_x\rangle \big|\big|\langle (|A_2|^2+iA_1A^{\dag}_1)\hat{k}_x,\hat{k}_x\rangle\big|\\
			&&+\frac{1}{2}\langle (|A_1|^2+ A_2A^{\dag}_2)\hat{k}_x,\hat{k}_x\rangle \langle (|A_2|^2+A_1A^{\dag}_1) \hat{k}_x,\hat{k}_x\rangle \\
			&\le & \textbf{ber}(|A_1|^2+iA_2A^{\dag}_2)\textbf{ber}(|A_2|^2+iA_1A^{\dag}_1)+\frac{1}{2}\||A_1|^2+ A_2A^{\dag}_2\|_{ber}\||A_2|^2+A_1A^{\dag}_1\|_{ber}.
		\end{eqnarray*}
		Therefore, we obtain the required bound by taking supremum over all $x\in X$.
	\end{proof}
	
	In particular, considering $A_1=A_2=A$ in Theorem \ref{T1} we get the following corollary.
	\begin{cor}
		Let $A\in  \mathscr{CR(H)}$. Then\label{ere}
		\[\textbf{ber}^2(A) \le\frac{1}{4}\textbf{ber}^2(|A|^2+iAA^{\dag})+\frac{1}{8}\||A|^2+ AA^{\dag}\|^2_{ber}.\]
	\end{cor}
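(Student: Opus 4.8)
The plan is to simply specialize Theorem \ref{T1} to the diagonal case $A_1=A_2=A$ and then use the positive homogeneity of the Berezin number to rescale.

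First I would set $A_1=A_2=A$ in the statement of Theorem \ref{T1}. Since $A\in\mathscr{CR}(\mathscr{H})$, the hypothesis is satisfied, and the right-hand side collapses: both factors $\textbf{ber}(|A_1|^2+iA_2A_2^{\dag})$ and $\textbf{ber}(|A_2|^2+iA_1A_1^{\dag})$ become $\textbf{ber}(|A|^2+iAA^{\dag})$, and likewise the two Berezin-norm factors both become $\||A|^2+AA^{\dag}\|_{ber}$. This yields
\[
\textbf{ber}^2(2A)\le \textbf{ber}^2\big(|A|^2+iAA^{\dag}\big)+\frac{1}{2}\big\||A|^2+AA^{\dag}\big\|_{ber}^2 .
\]
Next I would invoke the elementary fact that $\textbf{ber}(\mu B)=|\mu|\,\textbf{ber}(B)$ for any scalar $\mu$ and $B\in\mathscr{B}(\mathscr{H})$, which is immediate from the definition $\textbf{ber}(B)=\sup_{x\in X}|\langle B\hat{k}_x,\hat{k}_x\rangle|$ together with $\langle \mu B\hat{k}_x,\hat{k}_x\rangle=\mu\langle B\hat{k}_x,\hat{k}_x\rangle$. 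In particular $\textbf{ber}(2A)=2\,\textbf{ber}(A)$, so $\textbf{ber}^2(2A)=4\,\textbf{ber}^2(A)$.

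Substituting this into the displayed inequality gives
\[
4\,\textbf{ber}^2(A)\le \textbf{ber}^2\big(|A|^2+iAA^{\dag}\big)+\frac{1}{2}\big\||A|^2+AA^{\dag}\big\|_{ber}^2 ,
\]
and dividing both sides by $4$ produces exactly the claimed bound
\[
\textbf{ber}^2(A)\le \frac{1}{4}\,\textbf{ber}^2\big(|A|^2+iAA^{\dag}\big)+\frac{1}{8}\big\||A|^2+AA^{\dag}\big\|_{ber}^2 .
\]

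There is really no main obstacle here; the only point requiring a word of care is the homogeneity of $\textbf{ber}$ under scalar multiplication, but that is trivial and could even be stated inline without a separate lemma. Everything else is a direct substitution into Theorem \ref{T1}, so I would keep the proof to three or four lines.
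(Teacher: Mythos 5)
Your proposal is correct and is exactly the paper's own argument: the authors obtain Corollary \ref{ere} by setting $A_1=A_2=A$ in Theorem \ref{T1} and using $\textbf{ber}(2A)=2\,\textbf{ber}(A)$ to divide through by $4$. Nothing further is needed.
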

	\begin{remark}
		In \cite[Cor. 2.18]{sen2025berezin}, it was given that \begin{eqnarray}
			\textbf{ber}^2(A)\le \frac{1}{4}\textbf{ber}^2\big(|A|+i|A^*|\big)+\frac{1}{8}\||A|+|A^*|\|^2_{ber}\label{gfd2}\end{eqnarray} If we take $ \mathscr{H}=\mathbb{C}^2$ and $A=\begin{bmatrix}
			1 & 1 \\
			0 & 0 
		\end{bmatrix}$ then the bound in (\ref{gfd2}) gives $\textbf{ber}^2(A)\le 1.1875$, while Corollary \ref{ere} gives $\textbf{ber}^2(A)\le 1  $. Therefore, the inequality in Corollary \ref{ere} is sharper  than the inequality in (\ref{gfd2}).
	\end{remark}

	To prove our next result we need the following lemma.

	\begin{lemma}\label{mm1}
		Let $A_1,A_2,A_3\in  \mathscr{B}( \mathscr{H})$ such that $A_1$ and $A_2$ are both positive. If $\begin{bmatrix}
			A_1 & A^*_3\\
			A_3 & A_2
		\end{bmatrix} \in  \mathscr{B}( \mathscr{H}\oplus  \mathscr{H})$ is positive  then for any $r\ge 1$ \[\textbf{ber}^{2r}(A_3)\le\frac{1}{2}\bigg(\|A^{2r}_1\|_{ber}\|A^{2r}_2\|_{ber}\bigg)^{\frac{1}{2}}+\frac{1}{2}\textbf{ber}^r(A_1A_2).\]
	\end{lemma}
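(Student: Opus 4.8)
The strategy is to estimate $|\langle A_3\hat{k}_x,\hat{k}_x\rangle|^{2r}$ for an arbitrary normalized reproducing kernel $\hat{k}_x$ and take the supremum only at the end. The positivity of the block matrix, via Lemma~\ref{mm} with $u=v=\hat{k}_x$, gives
\[
|\langle A_3\hat{k}_x,\hat{k}_x\rangle|^{2}\le\langle A_1\hat{k}_x,\hat{k}_x\rangle\,\langle A_2\hat{k}_x,\hat{k}_x\rangle .
\]
Since $A_1,A_2$ are positive, both factors on the right are nonnegative reals, so their product equals $|\langle A_1\hat{k}_x,\hat{k}_x\rangle\,\langle\hat{k}_x,A_2\hat{k}_x\rangle|$, which is exactly the left-hand side of Buzano's inequality (Lemma~\ref{lmc}) applied with $u=A_1\hat{k}_x$, $v=A_2\hat{k}_x$, $w=\hat{k}_x$. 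This yields
\[
\langle A_1\hat{k}_x,\hat{k}_x\rangle\,\langle A_2\hat{k}_x,\hat{k}_x\rangle\le\frac12\Big(\|A_1\hat{k}_x\|\,\|A_2\hat{k}_x\|+|\langle A_1\hat{k}_x,A_2\hat{k}_x\rangle|\Big).
\]

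Next I would raise this to the $r$-th power, combine it with the first display, and apply Bohr's inequality (Lemma~\ref{mit}) with $n=2$ to distribute the $r$-th power over the sum; the factor $2^{r-1}$ it produces cancels against the $2^{-r}$ coming from the $\tfrac12$, leaving
\[
|\langle A_3\hat{k}_x,\hat{k}_x\rangle|^{2r}\le\frac12\Big(\|A_1\hat{k}_x\|^{r}\|A_2\hat{k}_x\|^{r}+|\langle A_1\hat{k}_x,A_2\hat{k}_x\rangle|^{r}\Big).
\]
To finish, the two terms are bounded separately. For the first, $\|A_i\hat{k}_x\|^{2r}=\langle A_i^{2}\hat{k}_x,\hat{k}_x\rangle^{r}\le\langle A_i^{2r}\hat{k}_x,\hat{k}_x\rangle\le\|A_i^{2r}\|_{ber}$ by Lemma~\ref{dog}(i) applied to the positive operator $A_i^{2}$ (using that $A_i^{2r}$ is positive, so its Berezin number and Berezin norm coincide), whence $\|A_1\hat{k}_x\|^{r}\|A_2\hat{k}_x\|^{r}\le\big(\|A_1^{2r}\|_{ber}\|A_2^{2r}\|_{ber}\big)^{1/2}$. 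For the second, $\langle A_1\hat{k}_x,A_2\hat{k}_x\rangle=\langle A_2A_1\hat{k}_x,\hat{k}_x\rangle=\langle (A_1A_2)^{*}\hat{k}_x,\hat{k}_x\rangle$ because $A_1,A_2$ are self-adjoint, and since $|\widetilde{B^{*}}(x)|=|\widetilde{B}(x)|$ we get $|\langle A_1\hat{k}_x,A_2\hat{k}_x\rangle|^{r}\le\textbf{ber}^{r}(A_1A_2)$. Substituting and taking the supremum over $x\in X$ gives the stated inequality.

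I do not expect a serious obstacle here: once one decides to break the product $\langle A_1\hat{k}_x,\hat{k}_x\rangle\langle A_2\hat{k}_x,\hat{k}_x\rangle$ by Buzano's inequality into a ``diagonal'' part, handled by Lemma~\ref{dog} together with the positivity of $A_i^{2r}$, and an ``off-diagonal'' part that recombines into $A_1A_2$, the rest is routine. The only point requiring a little care is the constant: one must verify that the $2^{-r}$ from Buzano and the $2^{r-1}$ from Bohr's inequality combine to precisely the factor $\tfrac12$ claimed in the statement, and that the exponent manipulations $\|A_i\hat{k}_x\|^{2r}=\langle A_i^2\hat{k}_x,\hat{k}_x\rangle^r$ are valid for all $r\ge1$.
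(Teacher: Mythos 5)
Your proof is correct and follows essentially the same route as the paper: Lemma~\ref{mm} to pass to the product $\langle A_1\hat{k}_x,\hat{k}_x\rangle\langle A_2\hat{k}_x,\hat{k}_x\rangle$, Buzano's inequality (Lemma~\ref{lmc}) to split it, a power-mean step, Lemma~\ref{dog} for the diagonal term, and the identification of the cross term with $\textbf{ber}(A_1A_2)$. The only cosmetic difference is that you invoke Bohr's inequality (Lemma~\ref{mit}) with $n=2$ where the paper uses Lemma~\ref{cat}(i) with $\lambda=\mu=\tfrac12$; these are the same inequality, and your constant bookkeeping checks out.
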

	\begin{proof} Let $\hat{k}_x$ be a normalized reproducing kernel of $ \mathscr{H}$. From Lemma \ref{mm}, we have
		\begin{eqnarray*}
			&&|\langle A_3\hat{k}_x,\hat{k}_x \rangle|^2\\&\le& \langle A_1\hat{k}_x,\hat{k}_x \rangle \langle A_2\hat{k}_x,\hat{k}_x\rangle\\
			&=& \langle A_1\hat{k}_x,\hat{k}_x \rangle \langle \hat{k}_x,A_2\hat{k}_x\rangle\\
			&\le & \frac{1}{2} \bigg(\langle A_1\hat{k}_x,A_1\hat{k}_x\rangle \langle A_2\hat{k}_x,A_2\hat{k}_x\rangle \bigg)^{\frac{1}{2}}+\frac{1}{2}|\langle A_1\hat{k}_x,A_2\hat{k}_x\rangle|\,\,\,\,\,\,\,\,\,\,\,\,\,\,\,\,\,\Big(\mbox{by Lemma \ref{lmc}}\Big)\\
			&\le & \bigg(\frac{1}{2} \big(\langle A_1\hat{k}_x,A_1\hat{k}_x\rangle \langle A_2\hat{k}_x,A_2\hat{k}_x\rangle \big)^{\frac{r}{2}}+\frac{1}{2}|\langle A_1\hat{k}_x,A_2\hat{k}_x\rangle|^r\bigg)^{\frac{1}{r}}\,\,\,\,\,\,\Big(\mbox{by Lemma \ref{cat}}\Big)\\
			&\le & \bigg(\frac{1}{2} \big(\langle A^{2r}_1\hat{k}_x,\hat{k}_x\rangle \langle A^{2r}_2\hat{k}_x,\hat{k}_x\rangle \big)^{\frac{1}{2}}+\frac{1}{2}|\langle A_1\hat{k}_x,A_2\hat{k}_x\rangle|^r\bigg)^{\frac{1}{r}}\,\,\,\,\,\,\,\,\,\,\,\,\,\,\,\Big(\mbox{by Lemma \ref{dog}}\Big)\\
			&\le &\bigg( \frac{1}{2}\big(\|A^{2r}_1\|_{ber}\|A^{2r}_2\|_{ber}\big)^{\frac{1}{2}}+\frac{1}{2}\textbf{ber}^r(A_1A_2)\bigg)^{\frac{1}{r}}.
		\end{eqnarray*}
		Therefore, the required inequality is obtained by taking supremum over all $x\in X$.
	\end{proof}
	\begin{theorem}
		Let $A_1,A_2\in  \mathscr{CR}( \mathscr{H})$. Then for any $r\ge 1$ \label{hjh}
		\begin{eqnarray*}
			\textbf{  ber}^{2r}(A_1+A_2)&\le&\frac{1}{2}\bigg(\big\|\big( A^{ \dag}_1A_1+A^*_2A_2\big)^{2r}\big\|_{ber}\big\|\big(A_1A^{ *}_1+A_2A^{\dag}_2\big)^{2r}\big\|_{ber}\bigg)^{\frac{1}{2}}\nonumber\\&&+\frac{1}{2}\textbf{ber}^r\bigg(( A^{ \dag}_1A_1+A^*_2A_2)(A_1A^{ *}_1+A_2A^{\dag}_2)\bigg).
		\end{eqnarray*}
		
	\end{theorem}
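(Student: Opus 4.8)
The plan is to manufacture a positive $2\times 2$ operator matrix having $A_1+A_2$ in its off‑diagonal corners and then quote Lemma~\ref{mm1}. Write $B_1:=A_1^{\dagger}A_1+A_2^{*}A_2$ and $B_2:=A_1A_1^{*}+A_2A_2^{\dagger}$; each is a sum of positive operators, hence positive.

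First I would record a ``dual'' version of Lemma~\ref{bandor}. If $T\in\mathscr{CR}(\mathscr{H})$ then $T^{*}\in\mathscr{CR}(\mathscr{H})$ and, directly from the defining relations $(a)$--$(d)$, $(T^{*})^{\dagger}=(T^{\dagger})^{*}$, so that $T^{*}(T^{*})^{\dagger}=T^{*}(T^{\dagger})^{*}=(T^{\dagger}T)^{*}=T^{\dagger}T$ and $|T^{*}|^{2}=TT^{*}$. Applying Lemma~\ref{bandor} to $T^{*}$ with the roles of the two vectors interchanged, and using $\langle T^{*}v,u\rangle=\overline{\langle Tu,v\rangle}$, gives
\[
|\langle Tu,v\rangle|^{2}\le\langle T^{\dagger}Tu,u\rangle\,\langle TT^{*}v,v\rangle\qquad(u,v\in\mathscr{H}).
\]

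Next, fix $u,v\in\mathscr{H}$. Applying the displayed estimate to $A_1$ and Lemma~\ref{bandor} itself to $A_2$ yields
\[
|\langle A_1u,v\rangle|\le\langle A_1^{\dagger}A_1u,u\rangle^{1/2}\langle A_1A_1^{*}v,v\rangle^{1/2},\qquad
|\langle A_2u,v\rangle|\le\langle A_2^{*}A_2u,u\rangle^{1/2}\langle A_2A_2^{\dagger}v,v\rangle^{1/2}.
\]
Adding these, bounding $|\langle(A_1+A_2)u,v\rangle|$ by the triangle inequality, squaring, and invoking the scalar inequality (\ref{sree}) with $\alpha_1=\langle A_1^{\dagger}A_1u,u\rangle^{1/2}$, $\alpha_2=\langle A_2^{*}A_2u,u\rangle^{1/2}$, $\beta_1=\langle A_1A_1^{*}v,v\rangle^{1/2}$, $\beta_2=\langle A_2A_2^{\dagger}v,v\rangle^{1/2}$, I obtain
\[
|\langle(A_1+A_2)u,v\rangle|^{2}\le\langle B_1u,u\rangle\,\langle B_2v,v\rangle
\]
for all $u,v\in\mathscr{H}$. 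Since $B_1,B_2\ge 0$, Lemma~\ref{mm} shows that
\[
\begin{bmatrix}B_1 & (A_1+A_2)^{*}\\ A_1+A_2 & B_2\end{bmatrix}
\]
is positive in $\mathscr{B}(\mathscr{H}\oplus\mathscr{H})$. It then remains to apply Lemma~\ref{mm1} with its $A_1,A_2,A_3$ taken to be $B_1,B_2,A_1+A_2$ respectively; unwinding $B_1^{2r}$, $B_2^{2r}$ and $B_1B_2$ gives exactly the asserted inequality.

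The only step that requires any care is the dual form of Lemma~\ref{bandor}, together with the observation that $A_1$ and $A_2$ must be fed into Lemma~\ref{bandor} in the two \emph{different} ways; this asymmetry is precisely what produces the mixed combination $A_1^{\dagger}A_1+A_2^{*}A_2$ in one diagonal slot and $A_1A_1^{*}+A_2A_2^{\dagger}$ in the other. Everything after the reduction to the positive block matrix is a direct substitution into Lemmas~\ref{mm} and~\ref{mm1}.
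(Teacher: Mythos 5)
Your proof is correct and follows essentially the same route as the paper: both arguments reduce to the positivity of the block operator matrix with diagonal entries $A_1^{\dagger}A_1+A_2^{*}A_2$ and $A_1A_1^{*}+A_2A_2^{\dagger}$ and corner $A_1+A_2$, and then invoke Lemma~\ref{mm1}. The paper gets this positivity by simply summing the two positive block matrices $\begin{bmatrix} A_1^{\dagger}A_1 & A_1^{*}\\ A_1 & A_1A_1^{*}\end{bmatrix}$ and $\begin{bmatrix} A_2^{*}A_2 & A_2^{*}\\ A_2 & A_2A_2^{\dagger}\end{bmatrix}$, whereas you verify the equivalent scalar inequality via Lemma~\ref{bandor}, its adjoint form, and (\ref{sree}) before converting back through Lemma~\ref{mm} --- a more detailed but equivalent justification of the same step.
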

	\begin{proof}
		The $2\times 2$ block matrices $\begin{bmatrix}
			A^{\dag}_1A_1 & A^*_1\\
			A_1 & A_1A^*_1
		\end{bmatrix}$ and $\begin{bmatrix}
			A^*_2A_2 & A^*_2\\
			A_2 & A_2A^{\dag}_2
		\end{bmatrix}$ are positive. Therefore, the sum of these two block matrices is
		$\begin{bmatrix}
			A^{ \dag}_1A_1+A^*_2A_2& (A_1+A_2)^*\\
			A_1+A_2 & A_1A^{ *}_1+A_2A^{\dag}_2 
		\end{bmatrix}$, which is also positive. From Lemma \ref{mm1}
		we obtain the desired inequality.      
	\end{proof}
	\begin{cor}
		Let $A\in  \mathscr{CR}( \mathscr{H})$. Then
		\begin{eqnarray*}
			\textbf{  ber}^{2r}(A)
			&\le& \frac{1}{2}\min\Bigg\{\bigg(\big\|\big( A^*A\big)^{2r}\big\|_{ber}\big\|\big(AA^{\dag}\big)^r\big\|_{ber}\bigg)^{\frac{1}{2}}+\textbf{ber}^r\big(A^*A^2A^{\dag}\big),\\
			&& \, \,\,\, \,\,  \,\, \,\, \,\,  \,\, \,\, \,\,\,\,\,\,\bigg(\big\|\big(AA^{ *}\big)^{2r}\big\|_{ber}\big\| \big(A^{ \dag}A\big)^r\big\|_{ber}\bigg)^{\frac{1}{2}}+\textbf{ber}^r\big( A^{ \dag}A^2A^{ *}\big)\Bigg\}.
		\end{eqnarray*}
	\end{cor}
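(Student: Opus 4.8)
The plan is to obtain the two quantities inside the minimum by specializing Theorem \ref{hjh} to the two degenerate choices $A_1=0,\ A_2=A$ and $A_1=A,\ A_2=0$, and then to simplify the resulting bounds by exploiting the fact that $AA^{\dag}$ and $A^{\dag}A$ are orthogonal projections.

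First I would note that $0\in\mathscr{CR}(\mathscr{H})$ with $0^{\dag}=0$, so Theorem \ref{hjh} applies to the pairs above. Taking $A_1=0$ and $A_2=A$, the combinations appearing in Theorem \ref{hjh} collapse to $A_1^{\dag}A_1+A_2^*A_2=A^*A$ and $A_1A_1^*+A_2A_2^{\dag}=AA^{\dag}$, and their product is $(A^*A)(AA^{\dag})=A^*A^2A^{\dag}$, which yields
\[
\textbf{ber}^{2r}(A)\le \frac{1}{2}\Big(\big\|(A^*A)^{2r}\big\|_{ber}\,\big\|(AA^{\dag})^{2r}\big\|_{ber}\Big)^{\frac{1}{2}}+\frac{1}{2}\textbf{ber}^r\big(A^*A^2A^{\dag}\big).
\]
Symmetrically, taking $A_1=A$ and $A_2=0$ gives $A_1^{\dag}A_1+A_2^*A_2=A^{\dag}A$, $A_1A_1^*+A_2A_2^{\dag}=AA^*$, and product $(A^{\dag}A)(AA^*)=A^{\dag}A^2A^*$, hence
\[
\textbf{ber}^{2r}(A)\le \frac{1}{2}\Big(\big\|(AA^*)^{2r}\big\|_{ber}\,\big\|(A^{\dag}A)^{2r}\big\|_{ber}\Big)^{\frac{1}{2}}+\frac{1}{2}\textbf{ber}^r\big(A^{\dag}A^2A^*\big).
\]

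Finally I would use that $V:=AA^{\dag}$ and $U:=A^{\dag}A$ are self-adjoint (by (c) and (d)) and idempotent, since $V^2=AA^{\dag}AA^{\dag}=AA^{\dag}=V$ and $U^2=U$ by (a); therefore $(AA^{\dag})^{2r}=AA^{\dag}=(AA^{\dag})^r$ and $(A^{\dag}A)^{2r}=A^{\dag}A=(A^{\dag}A)^r$ for every $r\ge1$. Substituting these identities into the two displays and taking the smaller of the two bounds yields precisely the asserted inequality. The argument is essentially a direct specialization, so there is no genuine obstacle; the only point needing a moment's care is that Theorem \ref{hjh} still applies when one operand vanishes, i.e. that the relevant $2\times2$ block matrix built from $A$ alone, namely $\begin{bmatrix} A^*A & A^*\\ A & AA^{\dag}\end{bmatrix}$ (resp. $\begin{bmatrix} A^{\dag}A & A^*\\ A & AA^*\end{bmatrix}$), is positive — which is exactly the content of Lemma \ref{bandor} combined with Lemma \ref{mm}, as in the proof of Theorem \ref{hjh}.
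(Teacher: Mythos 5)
Your proposal is correct and follows essentially the same route as the paper: specialize Theorem \ref{hjh} to the pairs $(A_1,A_2)=(0,A)$ and $(A,0)$ and take the minimum of the two resulting bounds. Your explicit observation that $AA^{\dag}$ and $A^{\dag}A$ are orthogonal projections, so that $(AA^{\dag})^{2r}=(AA^{\dag})^{r}$ and $(A^{\dag}A)^{2r}=(A^{\dag}A)^{r}$, is a welcome clarification of a step the paper leaves implicit when passing from the exponent $2r$ in the theorem to the exponent $r$ in the corollary.
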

	\begin{proof}  Considering $A_1=0$ and $A_2=0$ in Theorem \ref{hjh} respectively, we get
		\begin{eqnarray}
			\textbf{ber}^{2r}(A_2)&\le&\frac{1}{2}\bigg(\big\|\big( A^*_2A_2\big)^{2r}\big\|_{ber}\big\|(A_2A^{\dag}_2)^r\big\|_{ber}\bigg)^{\frac{1}{2}}+\frac{1}{2}\textbf{ber}^r\big(A^*_2A^2_2A^{\dag}_2\big)\label{hari0}\end{eqnarray}
		and
		\begin{eqnarray}
			\textbf{ ber}^{2r}(A_1)&\le&\frac{1}{2}\bigg(\big\|\big(A_1A^{ *}_1\big)^{2r}\big\|_{ber}\big\|( A^{ \dag}_1A_1)^r\big\|_{ber}\bigg)^{\frac{1}{2}}+\frac{1}{2}\textbf{ber}^r\big( A^{ \dag}_1A^2_1A^{ *}_1\big).\label{hari1}
		\end{eqnarray}
		Combining (\ref{hari0}) and (\ref{hari1}) we get our required bound.
	\end{proof}

	Now, we prove the following lemma to obtain our next result.

	\begin{lemma}
		Let $A_1,A_2,A_3\in  \mathscr{B}( \mathscr{H})$, where $A_1$ and $A_2$ are both positive.  If $\begin{bmatrix}
			A_1 & A^*_3\\
			A_3 & A_2
		\end{bmatrix} \in  \mathscr{B}( \mathscr{H}\oplus  \mathscr{H})$ is positive then for any $r\ge 1$
		\[\textbf{ber}^{2r}(A_3) \le \frac{1}{4}\|A^{2r}_1+A^{2r}_2\|_{ber}+\frac{1}{2}\textbf{ber}^{r}(A_1A_2).\]\label{virat}
	\end{lemma}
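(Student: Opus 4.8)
The plan is to follow the same route as the proof of Lemma \ref{mm1}, the only new ingredient being one extra application of the arithmetic--geometric mean inequality that converts the product $\big(\|A_1^{2r}\|_{ber}\|A_2^{2r}\|_{ber}\big)^{1/2}$ into $\tfrac14\|A_1^{2r}+A_2^{2r}\|_{ber}$.

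Fix a normalized reproducing kernel $\hat k_x$. Positivity of the block matrix together with Lemma \ref{mm} gives $|\langle A_3\hat k_x,\hat k_x\rangle|^2\le\langle A_1\hat k_x,\hat k_x\rangle\langle A_2\hat k_x,\hat k_x\rangle$. Since $A_1$ and $A_2$ are positive (hence self-adjoint), $\langle A_1\hat k_x,\hat k_x\rangle$ and $\langle A_2\hat k_x,\hat k_x\rangle=\langle\hat k_x,A_2\hat k_x\rangle$ are nonnegative reals, so Buzano's inequality (Lemma \ref{lmc}) applied with $u=A_1\hat k_x$, $w=\hat k_x$, $v=A_2\hat k_x$ yields
\[
|\langle A_3\hat k_x,\hat k_x\rangle|^2\le\tfrac12\Big(\langle A_1^2\hat k_x,\hat k_x\rangle^{1/2}\langle A_2^2\hat k_x,\hat k_x\rangle^{1/2}+|\langle A_1A_2\hat k_x,\hat k_x\rangle|\Big),
\]
where I use $\|A_j\hat k_x\|^2=\langle A_j^2\hat k_x,\hat k_x\rangle$ and $|\langle A_1\hat k_x,A_2\hat k_x\rangle|=|\langle A_1A_2\hat k_x,\hat k_x\rangle|$ (both valid because $A_1,A_2$ are self-adjoint). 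Raising to the power $r\ge1$ and invoking Lemma \ref{cat}(i) with $\lambda=\mu=\tfrac12$ to pull the exponent inside, I obtain
\[
|\langle A_3\hat k_x,\hat k_x\rangle|^{2r}\le\tfrac12\big(\langle A_1^2\hat k_x,\hat k_x\rangle^{1/2}\langle A_2^2\hat k_x,\hat k_x\rangle^{1/2}\big)^{r}+\tfrac12|\langle A_1A_2\hat k_x,\hat k_x\rangle|^{r}.
\]

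For the first summand I would apply Lemma \ref{dog}(i) to the positive operators $A_1^2$ and $A_2^2$ to get $\langle A_j^2\hat k_x,\hat k_x\rangle^{r/2}\le\langle A_j^{2r}\hat k_x,\hat k_x\rangle^{1/2}$, and then one final arithmetic--geometric mean step bounds $\langle A_1^{2r}\hat k_x,\hat k_x\rangle^{1/2}\langle A_2^{2r}\hat k_x,\hat k_x\rangle^{1/2}$ by $\tfrac12\langle(A_1^{2r}+A_2^{2r})\hat k_x,\hat k_x\rangle\le\tfrac12\|A_1^{2r}+A_2^{2r}\|_{ber}$, the last inequality using positivity of $A_1^{2r}+A_2^{2r}$. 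For the second summand, $|\langle A_1A_2\hat k_x,\hat k_x\rangle|^{r}\le\textbf{ber}^{r}(A_1A_2)$ directly. Assembling the two bounds gives $|\langle A_3\hat k_x,\hat k_x\rangle|^{2r}\le\tfrac14\|A_1^{2r}+A_2^{2r}\|_{ber}+\tfrac12\textbf{ber}^{r}(A_1A_2)$, and taking the supremum over $x\in X$ finishes the proof. I do not expect a serious obstacle here; the only points needing care are the self-adjointness checks that legitimize the use of Buzano's inequality and the bookkeeping of constants that produces the factor $\tfrac14$ rather than $\tfrac12$ in front of $\|A_1^{2r}+A_2^{2r}\|_{ber}$.
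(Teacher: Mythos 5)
Your proof is correct and takes essentially the same approach as the paper's: Lemma \ref{mm}, then Buzano's inequality (Lemma \ref{lmc}), then the power-mean step from Lemma \ref{cat}, followed by Lemma \ref{dog} and the arithmetic--geometric mean inequality to produce the factor $\tfrac14$. The only cosmetic differences are that the paper keeps the exponent $\tfrac{1}{r}$ outside until the final supremum and applies the AM--GM step before (rather than after) Lemma \ref{dog}.
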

	\begin{proof}
		Suppose that $\hat{k}_x$ be a normalized reproducing kernel of $ \mathscr{H}$. Using Lemma \ref{mm} we get
		\begin{eqnarray*}
			&& |\langle A_3\hat{k}_x,\hat{k}_x \rangle|^{2}\\
			&\le& \langle A_1\hat{k}_x,\hat{k}_x \rangle \langle A_2\hat{k}_x,\hat{k}_x\rangle\\
			&=& \langle A_1\hat{k}_x,\hat{k}_x \rangle \langle \hat{k}_x,A_2\hat{k}_x\rangle\\
			&\le & \frac{1}{2} \sqrt{\langle A_1\hat{k}_x,A_1\hat{k}_x\rangle }\sqrt{\langle A_2\hat{k}_x,A_2\hat{k}_x\rangle }+\frac{1}{2}|\langle A_1\hat{k}_x,A_2\hat{k}_x\rangle|\,\,\,\,\,\,\,\,\,\,\,\,\,\,\Big(\mbox{ by Lemma \ref{lmc}}\Big)\\
			&\le & \bigg(\frac{1}{2}\langle  A_1\hat{k}_x,A_1\hat{k}_x\rangle^{\frac{r}{2}}\langle A_2\hat{k}_x,A_2\hat{k}_x\rangle^{\frac{r}{2}}+\frac{1}{2}|\langle A_1\hat{k}_x,A_2\hat{k}_x\rangle|^r\bigg)^{\frac{1}{r}}\,\,\,\,\,\Big(\mbox{ by Lemma \ref{cat}}\Big)\\
			&\le & \bigg(\frac{1}{4}\big(\langle A^{2r}_1\hat{k}_x,\hat{k}_x\rangle+\langle A^{2r}_2 \hat{k}_x,\hat{k}_x\rangle\big)+\frac{1}{2}\textbf{ber}^r(A_1A_2)\bigg)^{\frac{1}{r}}\,\,\,\,\,\,\,\,\,\,\,\,\,\,\,\,\Big(\mbox{ by Lemma \ref{dog}}\Big)\\
			&=&\bigg(\frac{1}{4}\langle \big(A^{2r}_1+A^{2r}_2 \big)\hat{k}_x,\hat{k}_x\rangle+\frac{1}{2}\textbf{ber}^r(A_1A_2)\bigg)^{\frac{1}{r}}\\
			& \le& \bigg(\frac{1}{4}\|A^{2r}_1+A^{2r}_2\|_{ber}+\frac{1}{2}\textbf{ber}^r(A_1A_2)\bigg)^{\frac{1}{r}}.
		\end{eqnarray*}So, taking supremum over all $x\in X$, we get the desired bound.
	\end{proof}
	\begin{theorem}
		Let $A_1,A_2\in \mathscr{CR}( \mathscr{H})$. Then for any $r\ge 1$
		\begin{eqnarray*}
			\textbf{ber}^{2r}(A_1+A_2)
			&\le& \frac{1}{4} \big\|( A^{ \dag}_1A_1+A^*_2A_2)^{2r}+(A_1A^{ *}_1+A_2A^{\dag}_2)^{2r}\big\|_{ber}\\
			&&+\frac{1}{2}\textbf{ber}^r\bigg(( A^{ \dag}_1A_1+A^*_2A_2)(A_1A^{ *}_1+A_2A^{\dag}_2)\bigg).
		\end{eqnarray*}\label{Rohit}
	\end{theorem}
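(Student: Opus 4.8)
The plan is to run exactly the argument used for Theorem \ref{hjh}, simply replacing the appeal to Lemma \ref{mm1} by an appeal to Lemma \ref{virat}: both lemmas bound $\textbf{ber}^{2r}$ of the off-diagonal corner of a positive $2\times2$ block operator in terms of its diagonal entries, so once the block-positivity is established the conclusion is immediate.

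First I would verify that the two block operators
$\begin{bmatrix} A_1^{\dag}A_1 & A_1^* \\ A_1 & A_1A_1^* \end{bmatrix}$ and $\begin{bmatrix} A_2^*A_2 & A_2^* \\ A_2 & A_2A_2^{\dag} \end{bmatrix}$
are positive in $\mathscr{B}(\mathscr{H}\oplus\mathscr{H})$. By Lemma \ref{mm}, this reduces to the scalar inequalities $|\langle A_1 u,v\rangle|^2\le \langle A_1^{\dag}A_1 u,u\rangle\langle A_1A_1^* v,v\rangle$ and $|\langle A_2 u,v\rangle|^2\le \langle A_2^*A_2 u,u\rangle\langle A_2A_2^{\dag} v,v\rangle$ for all $u,v\in\mathscr{H}$. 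The second is precisely Lemma \ref{bandor} applied to $A_2$ (using $|A_2|^2=A_2^*A_2$); the first is Lemma \ref{bandor} applied to $A_1^*$, after interchanging the two vectors and invoking the Moore--Penrose identities $(A_1^*)^{\dag}=(A_1^{\dag})^*$ and $(A_1^{\dag}A_1)^*=A_1^{\dag}A_1$, which convert $\langle |A_1^*|^2 v,v\rangle\,\langle A_1^*(A_1^*)^{\dag}u,u\rangle$ into $\langle A_1A_1^* v,v\rangle\,\langle A_1^{\dag}A_1 u,u\rangle$.

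Since a sum of positive block operators is positive, it follows that
\[
\begin{bmatrix} A_1^{\dag}A_1+A_2^*A_2 & (A_1+A_2)^* \\ A_1+A_2 & A_1A_1^*+A_2A_2^{\dag} \end{bmatrix}
\]
is positive, its diagonal entries $A_1^{\dag}A_1+A_2^*A_2$ and $A_1A_1^*+A_2A_2^{\dag}$ are positive operators, and its $(2,1)$-corner is $A_1+A_2$. Applying Lemma \ref{virat} with $A_3=A_1+A_2$ and with these two positive operators playing the roles of $A_1$ and $A_2$ gives precisely the asserted inequality.

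I do not expect a genuine obstacle here; the only point requiring a little care is the bookkeeping that the first block matrix carries $A_1^{\dag}A_1$ (not $A_1^*A_1$) in its top-left corner, which is both what is needed for the diagonal entries to combine into $A_1^{\dag}A_1+A_2^*A_2$ and $A_1A_1^*+A_2A_2^{\dag}$ and exactly what Lemma \ref{bandor} delivers for $A_1^*$ after the variable interchange. Everything else is a direct substitution into Lemma \ref{virat}.
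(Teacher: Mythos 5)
Your proposal is correct and follows essentially the same route as the paper: the paper likewise obtains the positivity of the block operator $\begin{bmatrix} A_1^{\dag}A_1+A_2^*A_2 & (A_1+A_2)^* \\ A_1+A_2 & A_1A_1^*+A_2A_2^{\dag} \end{bmatrix}$ (as the sum of the same two positive blocks, exhibited in the proof of Theorem \ref{hjh}) and then invokes Lemma \ref{virat}. Your verification of the positivity of the two summand blocks via Lemma \ref{mm} and Lemma \ref{bandor} applied to $A_2$ and to $A_1^*$ is accurate and in fact supplies detail the paper leaves implicit.
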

	\begin{proof}
		Since, $A^{ \dag}_1A_1+A^*_2A_2$, $A_1A^{ *}_1+A_2A^{\dag}_2$  are positive so, the $2\times 2$ block matrix $\begin{bmatrix}
			A^{ \dag}_1A_1+A^*_2A_2 & (A_1+A_2)^*\\
			A_1+A_2 & A_1A^{ *}_1+A_2A^{\dag}_2
		\end{bmatrix}$ is also positive. Therefore, the desired bound follows from Lemma \ref{virat}.
	\end{proof}
	\begin{cor}
		Let $A\in  \mathscr{CR}( \mathscr{H})$. Then\label{ani1}
		\begin{eqnarray*}
			\textbf{ber}^{2r}(A)
			&\le& \min\bigg\{ \frac{1}{4}\big\||A^*|^{4r}+(A^{\dag}A)^r\big\|_{ber}+\frac{1}{2}\textbf{ber}^r(A^{\dag}A^2A^*),\\
			&&\hspace{0.8 cm}\frac{1}{4}\big\||A|^{4r}+(AA^{\dag})^r\big\|_{ber}+\frac{1}{2}\textbf{ber}^r(A^*A^2A^{\dag})\bigg\}.
		\end{eqnarray*}
		
	\end{cor}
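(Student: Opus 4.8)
The plan is to derive both members of the minimum by specializing Theorem~\ref{Rohit} to the two degenerate choices $A_1=A,\ A_2=0$ and $A_1=0,\ A_2=A$, and then to keep the smaller of the two resulting bounds.

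First I would set $A_1=A$ and $A_2=0$ in Theorem~\ref{Rohit}. Then $A_1^{\dag}A_1+A_2^{*}A_2=A^{\dag}A$ and $A_1A_1^{*}+A_2A_2^{\dag}=AA^{*}=|A^{*}|^{2}$, while $A_1+A_2=A$. By the Moore-Penrose relations $(a)$ and $(d)$, $A^{\dag}A$ is a self-adjoint idempotent, so $(A^{\dag}A)^{2r}=(A^{\dag}A)^{r}$; also $(|A^{*}|^{2})^{2r}=|A^{*}|^{4r}$, and the product term simplifies as $(A^{\dag}A)(AA^{*})=A^{\dag}A^{2}A^{*}$. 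Substituting into Theorem~\ref{Rohit} (and reordering the two summands inside the Berezin norm) yields
\[
\textbf{ber}^{2r}(A)\le \frac{1}{4}\big\||A^{*}|^{4r}+(A^{\dag}A)^{r}\big\|_{ber}+\frac{1}{2}\textbf{ber}^{r}\big(A^{\dag}A^{2}A^{*}\big).
\]

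Next I would set $A_1=0$ and $A_2=A$. An entirely parallel computation gives $A_1^{\dag}A_1+A_2^{*}A_2=A^{*}A=|A|^{2}$ and $A_1A_1^{*}+A_2A_2^{\dag}=AA^{\dag}$, which is again a self-adjoint idempotent by relations $(a)$ and $(c)$, so $(AA^{\dag})^{2r}=(AA^{\dag})^{r}$; moreover $(|A|^{2})^{2r}=|A|^{4r}$ and the product term is $(|A|^{2})(AA^{\dag})=A^{*}A^{2}A^{\dag}$. Theorem~\ref{Rohit} then gives
\[
\textbf{ber}^{2r}(A)\le \frac{1}{4}\big\||A|^{4r}+(AA^{\dag})^{r}\big\|_{ber}+\frac{1}{2}\textbf{ber}^{r}\big(A^{*}A^{2}A^{\dag}\big).
\]
Taking the minimum of the two displayed bounds is exactly the assertion of Corollary~\ref{ani1}.

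There is no real obstacle here; the only point requiring care is the observation that $AA^{\dag}$ and $A^{\dag}A$ are orthogonal projections, so that arbitrary positive powers of them collapse back to themselves. This is immediate from the defining relations $(a)$--$(d)$ of the Moore-Penrose inverse, and it is precisely what permits the exponent $2r$ appearing in Theorem~\ref{Rohit} to be lowered to $r$ in the final statement.
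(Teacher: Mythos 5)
Your proposal is correct and follows essentially the same route as the paper: both specialize Theorem~\ref{Rohit} to the degenerate choices where one of $A_1,A_2$ is the zero operator and then take the minimum of the two resulting bounds. Your added remarks — that $A^{\dag}A$ and $AA^{\dag}$ are orthogonal projections so $(A^{\dag}A)^{2r}=(A^{\dag}A)^{r}$ and $(AA^{\dag})^{2r}=(AA^{\dag})^{r}$, and the explicit simplification of the product terms — merely make explicit what the paper leaves implicit.
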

	\begin{proof}
		Considering $A_1=0$ and $A_2=0$ in Theorem \ref{Rohit}, respectively, we obtain 
		\begin{eqnarray}
			\textbf{ ber}^{2r}(A_2)
			&\le& \frac{1}{4}\big\||A_2|^{4r}+(A_2A^{\dag}_2)^r\big\|_{ber}+\frac{1}{2}\textbf{ber}^r(A^*_2A^2_2A^{\dag}_2)\label{indm1}
		\end{eqnarray}
		and
		\begin{eqnarray}
			\textbf{  ber}^{2r}(A_1)
			&\le& \frac{1}{4}\big\||A^*_1|^{4r}+(A^{\dag}_1A_1)^r\|_{ber}+\frac{1}{2}\textbf{ber}^r(A^{\dag}_1A^2_1A^*_1)\label{indm2}
		\end{eqnarray}
		Therefore, combining (\ref{indm1}) and (\ref{indm2}), we get the required bound.
	\end{proof}
	\begin{remark}
		In particular if we put $r=1$ in Corollary \ref{ani1} then it becomes \begin{eqnarray}
			\textbf{ber}^{2}(A)
			&\le& \min\bigg\{ \frac{1}{4}\big\||A^*|^{4}+A^{\dag}A\big\|_{ber}+\frac{1}{2}\textbf{ber}(A^{\dag}A^2A^*),\nonumber\\
			&&\hspace{0.8 cm}\frac{1}{4}\big\||A|^{4}+AA^{\dag}\big\|_{ber}+\frac{1}{2}\textbf{ber}(A^*A^2A^{\dag})\bigg\}.\label{ani011}
		\end{eqnarray}
		In particular if we take $\alpha=1$ in \cite[Th. 2.11]{sen2023bounds} then we get the following bound 
		\begin{eqnarray}
			\textbf{ber}^2(A)\le \frac{1}{4}\textbf{ber}(|A|^4+I)+\frac{1}{2}\textbf{ber}(|A|^2).\label{ani20}
		\end{eqnarray} 
		If we take $ \mathscr{H}=\mathbb{C}^2$ and  $A=\begin{bmatrix}
			\frac{1}{2} & 0\\
			\frac{1}{2} & 0
		\end{bmatrix}$. Then inequality (\ref{ani011}) gives $\textbf{ber}^2(A)\le 0.3125$ whereas inequality (\ref{ani20}) gives $\textbf{ber}^2(A)\le 0.5625$. Therefore inequality  (\ref{ani011}) gives better bound than inequality (\ref{ani20}).
		
	\end{remark}

	Next we obtain the following lemma.

	\begin{lemma}\label{trainvr}
		Let $A_1,A_2,A_3\in  \mathscr{B}( \mathscr{H})$, where $A_1$ and $A_2$ are both positive.  If $\begin{bmatrix}
			A_1 & A^*_3\\
			A_3 & A_2
		\end{bmatrix} \in  \mathscr{B}( \mathscr{H}\oplus  \mathscr{H})$ is positive then for any $r\ge 1$
		\[
		\textbf{ber}^{2r}(A_3)\le \bigg\|\frac{A^{r\gamma}_1}{\gamma}+\frac{A^{r\delta}_2}{\delta}\bigg\|_{ber},
		\] where $\gamma,\delta>1$ and $\frac{1}{\gamma}+\frac{1}{\delta}=1$.
	\end{lemma}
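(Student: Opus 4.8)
The plan is to mimic the structure of the proofs of Lemma \ref{virat} and Lemma \ref{mm1}, replacing the Buzano/Bohr step by the Young-type inequality in Lemma \ref{cat}(ii). Fix a normalized reproducing kernel $\hat{k}_x$. Since the block matrix $\begin{bmatrix} A_1 & A_3^* \\ A_3 & A_2 \end{bmatrix}$ is positive, Lemma \ref{mm} yields
\[
|\langle A_3\hat{k}_x,\hat{k}_x\rangle|^{2}\le \langle A_1\hat{k}_x,\hat{k}_x\rangle\,\langle A_2\hat{k}_x,\hat{k}_x\rangle .
\]
Now I would apply Lemma \ref{cat}(ii) with $\alpha=\langle A_1\hat{k}_x,\hat{k}_x\rangle$, $\beta=\langle A_2\hat{k}_x,\hat{k}_x\rangle$ and the given conjugate exponents $\gamma,\delta>1$, to obtain
\[
|\langle A_3\hat{k}_x,\hat{k}_x\rangle|^{2}\le \alpha\beta \le \Big(\tfrac{\alpha^{\gamma r}}{\gamma}+\tfrac{\beta^{\delta r}}{\delta}\Big)^{\frac1r},
\]
and hence, raising both sides to the power $r\ge 1$,
\[
|\langle A_3\hat{k}_x,\hat{k}_x\rangle|^{2r}\le \frac{\langle A_1\hat{k}_x,\hat{k}_x\rangle^{\gamma r}}{\gamma}+\frac{\langle A_2\hat{k}_x,\hat{k}_x\rangle^{\delta r}}{\delta}.
\]

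Next I would push the powers inside the inner products. Since $\gamma,\delta>1$ and $r\ge 1$, both exponents $\gamma r$ and $\delta r$ are $\ge 1$, so Lemma \ref{dog}(i) applied to the positive operators $A_1$ and $A_2$ gives $\langle A_1\hat{k}_x,\hat{k}_x\rangle^{\gamma r}\le \langle A_1^{\gamma r}\hat{k}_x,\hat{k}_x\rangle$ and $\langle A_2\hat{k}_x,\hat{k}_x\rangle^{\delta r}\le \langle A_2^{\delta r}\hat{k}_x,\hat{k}_x\rangle$. Combining and using linearity of the inner product,
\[
|\langle A_3\hat{k}_x,\hat{k}_x\rangle|^{2r}\le \Big\langle \Big(\tfrac{A_1^{r\gamma}}{\gamma}+\tfrac{A_2^{r\delta}}{\delta}\Big)\hat{k}_x,\hat{k}_x\Big\rangle \le \Big\|\tfrac{A_1^{r\gamma}}{\gamma}+\tfrac{A_2^{r\delta}}{\delta}\Big\|_{ber},
\]
where the last inequality is just the defining property of the Berezin norm (any value $\langle T\hat k_x,\hat k_x\rangle$ of a positive $T$ is dominated by $\|T\|_{ber}$). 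Taking the supremum over $x\in X$ produces the claimed bound $\textbf{ber}^{2r}(A_3)\le \big\|\tfrac{A_1^{r\gamma}}{\gamma}+\tfrac{A_2^{r\delta}}{\delta}\big\|_{ber}$.

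There is essentially no deep obstacle here; the only point that needs a moment's care is checking that the exponents in the Young step and in Lemma \ref{dog} are in the admissible ranges, namely that $\gamma r,\delta r\ge 1$, which is immediate from $\gamma,\delta>1$ and $r\ge 1$. One should also note that Lemma \ref{cat}(ii) is being used precisely in the form $\alpha\beta\le(\tfrac{\alpha^{\gamma r}}{\gamma}+\tfrac{\beta^{\delta r}}{\delta})^{1/r}$, which is its second displayed inequality with the substitutions $\alpha\mapsto\alpha$, $\beta\mapsto\beta$ kept and the outer exponent $r$ retained; this is exactly what lets the single $r$-th root absorb both $\gamma$ and $\delta$. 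Everything else is routine manipulation, and the argument parallels the earlier lemmas verbatim.
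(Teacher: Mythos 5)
Your proof is correct and follows essentially the same route as the paper's: Lemma \ref{mm} for the Cauchy--Schwarz-type bound, Lemma \ref{cat}(ii) in the form $\alpha\beta\le\bigl(\tfrac{\alpha^{\gamma r}}{\gamma}+\tfrac{\beta^{\delta r}}{\delta}\bigr)^{1/r}$, Lemma \ref{dog}(i) to move the powers inside, and then the Berezin norm bound before taking the supremum. The only cosmetic difference is that you raise to the power $r$ before applying Lemma \ref{dog}, whereas the paper carries the exponent $1/r$ through to the end; the two are equivalent.
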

	\begin{proof}
		Let $\hat{k}_x$ be a normalized reproducing kernel of $ \mathscr{H}$. From Lemma \ref{mm}  we get
		\begin{eqnarray*}
			|\langle A_3\hat{k}_x,\hat{k}_x \rangle|^2&\le& \langle A_1\hat{k}_x,\hat{k}_x \rangle \langle A_2\hat{k}_x,\hat{k}_x\rangle\\
			&\le& \bigg(\frac{1}{\gamma}\langle A_1 \hat{k}_x,\hat{k}_x \rangle^{r\gamma}+\frac{1}{\delta} \langle A_2\hat{k}_x,\hat{k}_x\rangle^{r\delta}\bigg)^{\frac{1}{r}}\,\,\,\Big(\mbox{ by  Lemma \ref{cat} }\Big)\\
			&\le& \bigg(\frac{1}{\gamma}\langle A^{r\gamma}_1 \hat{k}_x,\hat{k}_x \rangle+\frac{1}{\delta} \langle A^{r\delta}_2 \hat{k}_x,\hat{k}_x\rangle\bigg)^{\frac{1}{r}}\,\,\,\,\,\,\,\Big(\mbox{ by  Lemma \ref{dog} }\Big)\\
			&=& \bigg(\langle (\frac{A^{r\gamma}_1}{\gamma}+\frac{A^{r\delta}_2}{\delta})\hat{k}_x,\hat{k}_x\rangle\bigg)^{\frac{1}{r}}\\
			&\le &\bigg\|\frac{A^{r\gamma}_1}{\gamma}+\frac{A^{r\delta}_2}{\delta}\bigg\|^{\frac{1}{r}}_{ber}.
		\end{eqnarray*}
		Therefore, taking supremum over all $x\in X$, we get the required inequality.
	\end{proof}
	
	Our next result reads as follows.
	\begin{theorem}
		Let $A_1,A_2\in  \mathscr{C} \mathscr{R}( \mathscr{H})$ and $\lambda\in [0,1]$. Then  for any $r\ge 1$
		\[\textbf{ber}^{2r}(A_1+A_2)\le \bigg
		\|\frac{\big(  \big(A_1A^*_1\big)^{\lambda}+ \big(A^*_2A_2)^{\lambda} \big)^{r\gamma}}{\gamma}+\frac{\big( \big(A^{\dag}_1A_1)^{(1-\lambda)}+\big(A_2A^{\dag}_2\big)^{(1-\lambda)}\big)^{r\delta}}{\delta}\bigg\|_{ber},\]
		\label{trainv}where $\gamma,\delta>1$ and $\frac{1}{\gamma}+\frac{1}{\delta}=1$.
	\end{theorem}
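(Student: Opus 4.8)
The plan is to derive the bound from Lemma~\ref{trainvr}, which turns a positive $2\times 2$ operator matrix $\left[\begin{smallmatrix} P & Q^* \\ Q & R \end{smallmatrix}\right]$ into the estimate $\textbf{ber}^{2r}(Q)\le\bigl\|\tfrac{P^{r\gamma}}{\gamma}+\tfrac{R^{r\delta}}{\delta}\bigr\|_{ber}$. So it is enough to produce a positive block matrix in $\mathscr{B}(\mathscr{H}\oplus\mathscr{H})$ whose off-diagonal corner is $A_1+A_2$ and whose diagonal corners are the two positive operators
\[
P=(A_1A_1^*)^{\lambda}+(A_2^*A_2)^{\lambda},\qquad R=(A_1^{\dag}A_1)^{1-\lambda}+(A_2A_2^{\dag})^{1-\lambda}.
\]
Following the pattern of Theorems~\ref{hjh} and \ref{Rohit}, I would assemble this as a sum of two per-operator blocks,
\[
\mathbb{M}_1=\begin{bmatrix}(A_1A_1^*)^{\lambda}&A_1^*\\ A_1&(A_1^{\dag}A_1)^{1-\lambda}\end{bmatrix},\qquad
\mathbb{M}_2=\begin{bmatrix}(A_2^*A_2)^{\lambda}&A_2^*\\ A_2&(A_2A_2^{\dag})^{1-\lambda}\end{bmatrix},
\]
so that $\mathbb{M}_1+\mathbb{M}_2$ has off-diagonal corner $A_1+A_2$ and diagonal corners $P$ and $R$.

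The heart of the matter is to check that each $\mathbb{M}_i$ is positive. By Lemma~\ref{mm} this is equivalent to a mixed Schwarz-type inequality, namely
\[
|\langle A_1u,v\rangle|^2\le\langle(A_1A_1^*)^{\lambda}u,u\rangle\,\langle(A_1^{\dag}A_1)^{1-\lambda}v,v\rangle\qquad(u,v\in\mathscr{H}),
\]
together with its counterpart for $\mathbb{M}_2$ built from $A_2^*A_2$ and $A_2A_2^{\dag}$. This is a Moore--Penrose refinement of Lemma~\ref{bandor} in which the interpolation parameter $\lambda$ is shared between $A_iA_i^*$ and the orthogonal projection $A_i^{\dag}A_i$ onto the closed range of $A_i^*$. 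The natural way to obtain it is through the polar-type factorization $A_i=W_i|A_i|$ of the closed-range operator $A_i$, where $W_i$ is the partial isometry with $W_i^*W_i=A_i^{\dag}A_i$, $W_iW_i^*=A_iA_i^{\dag}$ and $|A_i^*|^s=W_i|A_i|^sW_i^*$ for $s>0$; one factors $\langle A_iu,v\rangle$ into a $\lambda$-power and a $(1-\lambda)$-power of the relevant modulus, applies the Cauchy--Schwarz inequality, and absorbs the range projections exactly as in the proof of Lemma~\ref{bandor}. I expect this to be the main obstacle: the rest of the argument is formal, but pinning down the precise Moore--Penrose mixed Schwarz inequality with the $\lambda/(1-\lambda)$ splitting --- and its two twisted variants needed for the two blocks --- is where the real content lies.

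Granting the positivity of $\mathbb{M}_1$ and $\mathbb{M}_2$, the sum $\mathbb{M}_1+\mathbb{M}_2$ is positive in $\mathscr{B}(\mathscr{H}\oplus\mathscr{H})$, and feeding it into Lemma~\ref{trainvr} --- with $A_1+A_2$ in the role of the off-diagonal corner, $P$ in the role of the $(1,1)$-entry, and $R$ in the role of the $(2,2)$-entry --- gives, for every $r\ge1$ and all conjugate exponents $\gamma,\delta>1$,
\[
\textbf{ber}^{2r}(A_1+A_2)\le\bigg\|\frac{\bigl((A_1A_1^*)^{\lambda}+(A_2^*A_2)^{\lambda}\bigr)^{r\gamma}}{\gamma}+\frac{\bigl((A_1^{\dag}A_1)^{1-\lambda}+(A_2A_2^{\dag})^{1-\lambda}\bigr)^{r\delta}}{\delta}\bigg\|_{ber},
\]
which is the asserted inequality. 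Taking $A_2=0$, and then $A_1=A_2=A$, would afterwards produce the corresponding single-operator corollaries, in the same way that Corollaries~\ref{ere} and \ref{ani1} are extracted from Theorems~\ref{T1} and \ref{Rohit}.
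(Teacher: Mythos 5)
Your reduction to Lemma \ref{trainvr} via the sum $\mathbb{M}_1+\mathbb{M}_2$ is exactly the route the paper takes, and you have correctly isolated where all the content sits: the positivity of the two blocks. The gap is that this positivity claim is false, so the step you defer cannot be carried out by the polar-decomposition argument you sketch (or by any other). Indeed, by Lemma \ref{mm}, positivity of $\begin{bmatrix} P & A^* \\ A & Q\end{bmatrix}$ gives $|\langle Au,v\rangle|^2\le\langle Pu,u\rangle\langle Qv,v\rangle$ for all $u,v$; taking the supremum over unit vectors $v$ yields $\|Au\|^2\le\|Q\|\,\langle Pu,u\rangle$, i.e. $A^*A\le\|Q\|\,P$. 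For $\mathbb{M}_2$ the $(2,2)$-corner $(A_2A_2^{\dag})^{1-\lambda}=A_2A_2^{\dag}$ is an orthogonal projection, so positivity would force $A_2^*A_2\le (A_2^*A_2)^{\lambda}$, which fails whenever $\|A_2\|>1$ and $\lambda<1$; for $\mathbb{M}_1$ it would force $A_1^*A_1\le (A_1A_1^*)^{\lambda}$, which fails already for the nilpotent $A_1=\bigl[\begin{smallmatrix}0&1\\0&0\end{smallmatrix}\bigr]$ (evaluate both sides at $e_2$). The obstruction is structural: in the proof of Lemma \ref{bandor} the range projection absorbs only the partial-isometry factor of $A$, never a fractional power of the modulus, so once $|A|^{2\lambda}$ with $\lambda<1$ sits on one side of a mixed Schwarz inequality the other side must carry $|A^*|^{2(1-\lambda)}$, not $AA^{\dag}$ or $A^{\dag}A$. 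A scaling check makes this vivid: replacing $A$ by $tA$ multiplies $|\langle Au,v\rangle|^2$ by $t^2$ but your proposed right-hand side only by $t^{2\lambda}$.

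In fact the statement itself, not merely the proof, fails. Take $\mathscr H=\mathbb C^2$ viewed (as in the paper's own remarks) as the reproducing kernel Hilbert space on a two-point set with normalized kernels $e_1,e_2$, and set $A_2=0$, $A_1=\bigl[\begin{smallmatrix}10&10\\0&0\end{smallmatrix}\bigr]$, $r=1$, $\lambda=\tfrac14$, $\gamma=\delta=2$. Then $\textbf{ber}^{2}(A_1+A_2)=100$, while $A_1A_1^*=\mathrm{diag}(200,0)$ and $A_1^{\dag}A_1=\tfrac12\bigl[\begin{smallmatrix}1&1\\1&1\end{smallmatrix}\bigr]$ give
\begin{equation*}
\bigg\|\frac{\big((A_1A_1^*)^{1/4}\big)^{2}}{2}+\frac{\big((A_1^{\dag}A_1)^{3/4}\big)^{2}}{2}\bigg\|_{ber}
=\bigg\|\begin{bmatrix}\tfrac{\sqrt{200}}{2}+\tfrac14 & \tfrac14\\[2pt] \tfrac14 & \tfrac14\end{bmatrix}\bigg\|_{ber}\approx 7.32,
\end{equation*}
so the asserted inequality is violated by a wide margin. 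To be fair to you, the paper's own proof asserts the same positivity with no justification, so you have faithfully reproduced its argument together with its flaw; a correct variant would use the genuine mixed-Schwarz blocks with $(A_i^*A_i)^{\lambda}$ and $(A_iA_i^*)^{1-\lambda}$ on the diagonal, at the price of losing the Moore--Penrose terms.
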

	\begin{proof}
		Since $\begin{bmatrix}
			\big(A_1A^*_1\big)^{\lambda} & A^*_1\\
			A_1 & \big(A^{\dag}_1A_1)^{(1-\lambda)}
		\end{bmatrix}$ and $\begin{bmatrix}
			\big(A^*_2A_2)^{\lambda} & A^*_2\\
			A_2 & \big(A_2A^{\dag}_2\big)^{(1-\lambda)}
		\end{bmatrix}$ are positive thus, the sum of these two block matrices 
		\[\begin{bmatrix}
			\big(A_1A^*_1\big)^{\lambda}+ \big(A^*_2A_2)^{\lambda}  & (A_1+A_2)^*\\
			A_1+A_2 & \big(A^{\dag}_1A_1)^{(1-\lambda)}+\big(A_2A^{\dag}_2\big)^{(1-\lambda)}
		\end{bmatrix}\ge 0.\]
		Therefore, the desired bounds are followed from Lemma \ref{trainvr}.
	\end{proof}
	
	\begin{remark} Taking $A_2=0$, $A_1=A$ and $\frac{1}{\gamma}=\frac{1}{\delta}=\frac{1}{2}$ in Theorem \ref{trainv} we get
		\begin{equation}
			\textbf{ber}^{2r}(A)\le\frac{1}{2} \big
			\| (AA^*)^{2\lambda r}+(A^{\dag}A)^{2(1-\lambda)r}\big\|_{ber}.\label{pinh1}
		\end{equation} In \cite[Cor. 3]{guesba2024berezin} it is obtained that
		
		\begin{equation}
			\textbf{ber}^r(A)\le \frac{1}{2}\bigg\|(AA^*)^{\lambda r}+(A^*A)^{(1-\lambda)r}\bigg\|_{ber}.\label{pinh2}
		\end{equation}
		
		If we take $ \mathscr{H}=\mathbb{C}^2$ and $A=\begin{bmatrix}
			2 & 2\\
			0 & 0
		\end{bmatrix}$, $\lambda=\frac{1}{2}$ and $r=2$  then inequality (\ref{pinh1}) gives $\textbf{ber}^2(A)\le \sqrt{32.25},$ whereas inequality (\ref{pinh2}) gives $\textbf{ber}^2(A)\le 6$.  Therefore inequality (\ref{pinh1})
		gives better bound than that in (\ref{pinh2}).
		
	\end{remark}
	In the following theorem we derive upper bounds of the Berezin norm for the sum of two operators.  
	
	\begin{theorem}\label{problm}
		Let $A_1,A_2\in  \mathscr{CR}( \mathscr{H})$. Then 
		\begin{eqnarray*}
			(i)~~  \|A_1+A_2\|^{2}_{ber} &\le &\|A^*_1A_1+A^*_2A_2\|_{ber}\|A_1A^{\dag}_1+A_2A^{\dag}_2\|_{ber}\\
			(ii)~~  \|A_1+A_2\|^{2}_{ber} &\le &\|A^*_1A_1+A^{\dag}_2A_2\|_{ber}\|A_2A^*_2+A_1A^{\dag}_1\|_{ber}.
		\end{eqnarray*}
	\end{theorem}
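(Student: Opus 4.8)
The plan is to estimate $|\langle (A_1+A_2)\hat{k}_x,\hat{k}_y\rangle|$ for arbitrary normalized reproducing kernels $\hat{k}_x,\hat{k}_y$ and then take the supremum over $x,y\in X$. Throughout I use that for a positive operator $T$ one has $\langle T\hat{k}_x,\hat{k}_x\rangle\le\textbf{ber}(T)=\|T\|_{ber}$, and that $A^*A=|A|^2$, $AA^*=|A^*|^2$, $AA^{\dag}$ and $A^{\dag}A$ are all positive (the last two being orthogonal projections, by the defining relations of the Moore--Penrose inverse). I also note that $A_2\in\mathscr{CR}(\mathscr{H})$ forces $A_2^*\in\mathscr{CR}(\mathscr{H})$, so that Lemma \ref{bandor} applies to $A_2^*$ as well.

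For $(i)$, I start with the triangle inequality $|\langle (A_1+A_2)\hat{k}_x,\hat{k}_y\rangle|\le|\langle A_1\hat{k}_x,\hat{k}_y\rangle|+|\langle A_2\hat{k}_x,\hat{k}_y\rangle|$ and apply Lemma \ref{bandor} to each summand (with $u=\hat{k}_x$, $v=\hat{k}_y$) to get $|\langle A_j\hat{k}_x,\hat{k}_y\rangle|\le\langle A_j^*A_j\hat{k}_x,\hat{k}_x\rangle^{1/2}\langle A_jA_j^{\dag}\hat{k}_y,\hat{k}_y\rangle^{1/2}$ for $j=1,2$. Then the scalar Cauchy--Schwarz inequality (\ref{sree}) consolidates the sum of the two products into $\langle(A_1^*A_1+A_2^*A_2)\hat{k}_x,\hat{k}_x\rangle^{1/2}\langle(A_1A_1^{\dag}+A_2A_2^{\dag})\hat{k}_y,\hat{k}_y\rangle^{1/2}$. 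Since $A_1^*A_1+A_2^*A_2$ and $A_1A_1^{\dag}+A_2A_2^{\dag}$ are positive, each factor is at most the square root of the respective Berezin norm, and taking the supremum over $x,y$ gives $(i)$.

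For $(ii)$, the only modification is in the treatment of the $A_2$-term: I write $\langle A_2\hat{k}_x,\hat{k}_y\rangle=\overline{\langle A_2^*\hat{k}_y,\hat{k}_x\rangle}$ and apply Lemma \ref{bandor} to $A_2^*$ with $u=\hat{k}_y$, $v=\hat{k}_x$. Using $|A_2^*|^2=A_2A_2^*$ together with the Moore--Penrose relations $(A_2^*)^{\dag}=(A_2^{\dag})^*$ and $(A_2^{\dag}A_2)^*=A_2^{\dag}A_2$, which give $A_2^*(A_2^*)^{\dag}=A_2^{\dag}A_2$, one obtains $|\langle A_2\hat{k}_x,\hat{k}_y\rangle|\le\langle A_2^{\dag}A_2\hat{k}_x,\hat{k}_x\rangle^{1/2}\langle A_2A_2^*\hat{k}_y,\hat{k}_y\rangle^{1/2}$. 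Combining this with the $A_1$-estimate from $(i)$ and invoking (\ref{sree}) and the positivity of $A_1^*A_1+A_2^{\dag}A_2$ and $A_1A_1^{\dag}+A_2A_2^*$ yields $(ii)$ after taking the supremum over $x,y$.

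The computations are routine; the only points requiring a little care are verifying $A_2^*(A_2^*)^{\dag}=A_2^{\dag}A_2$ from the defining identities of the Moore--Penrose inverse (needed for the exact form stated in $(ii)$), and checking that every operator sandwiched inside a Berezin norm on the right-hand side is positive, so that $\textbf{ber}(\cdot)=\|\cdot\|_{ber}$ may be used.
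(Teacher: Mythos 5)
Your proof is correct, and it reaches the theorem by a route that differs in technique, though not in substance, from the paper's. The paper asserts that the block matrices
\[
\begin{bmatrix}
A^*_1A_1+A^*_2A_2 & (A_1+A_2)^*\\
A_1+A_2 & A_1A^{\dag}_1+A_2A^{\dag}_2
\end{bmatrix},
\qquad
\begin{bmatrix}
A^*_1A_1+A^{\dag}_2A_2 & (A_1+A_2)^*\\
A_1+A_2 & A_2A^*_2+A_1A^{\dag}_1
\end{bmatrix}
\]
are positive (as sums of the positive blocks associated with $A_1$ and $A_2$ individually) and then invokes Lemma \ref{mm} to get $|\langle (A_1+A_2)\hat{k}_x,\hat{k}_y\rangle|^2\le\langle(\cdot)\hat{k}_x,\hat{k}_x\rangle\langle(\cdot)\hat{k}_y,\hat{k}_y\rangle$ in one stroke. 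You instead apply Lemma \ref{bandor} to each summand separately (to $A_1$ and $A_2$ for part $(i)$, and to $A_1$ and $A_2^*$ for part $(ii)$), then recombine via the triangle inequality and the scalar Cauchy--Schwarz inequality (\ref{sree}); this lands on exactly the same pointwise estimate. The two derivations are equivalent: by Lemma \ref{mm}, positivity of $\begin{bmatrix} A^*A & A^*\\ A & AA^{\dag}\end{bmatrix}$ \emph{is} Lemma \ref{bandor}, and positivity of $\begin{bmatrix} A^{\dag}A & A^*\\ A & AA^*\end{bmatrix}$ is Lemma \ref{bandor} applied to $A^*$ via $(A^*)^{\dag}=(A^{\dag})^*$ --- the identity you verify explicitly. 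What your version buys is self-containedness: the paper's proof leans on the positivity of the summand block matrices without spelling it out, whereas you supply precisely the verification (closedness of the range of $A_2^*$, $A_2^*(A_2^*)^{\dag}=A_2^{\dag}A_2$, positivity of the operators inside the Berezin norms) that the block-matrix shortcut leaves implicit. What the paper's version buys is uniformity: the same block-matrix template drives Theorems \ref{hjh}, \ref{Rohit} and \ref{trainv}, so reusing it here keeps the section's arguments parallel.
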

	\begin{proof} 
		Let $\hat{k}_x$ and  $\hat{k}_y$ be the normalized reproducing kernels of $ \mathscr{H}$.

		$\begin{bmatrix}
			A^*_1A_1+A^*_2A_2 & (A_1+A_2)^*\\
			A_1+A_2 & A_1A^{\dag}_1+A_2A^{\dag}_2
		\end{bmatrix}$ and $\begin{bmatrix}
			A^*_1A_1+A^{\dag}_2A_2 & (A_1+A_2)^*\\
			A_1+A_2 & A_2A^*_2+A_1A^{\dag}_1
		\end{bmatrix}$ are also positive.
		Now, using Lemma \ref{mm} we get
		\begin{eqnarray*}
			|\langle (A_1+A_2)\hat{k}_x,\hat{k}_y\rangle|^2 &\le& \langle  \big(A^*_1A_1+A^*_2A_2\big)\hat{k}_x,\hat{k}_x\rangle \langle \big(A_1A^{\dag}_1+A_2A^{\dag}_2\big)\hat{k}_y,\hat{k}_y\rangle\\
			&\le& \|A^*_1A_1+A^*_2A_2\|_{ber}\|A_1A^{\dag}_1+A_2A^{\dag}_2\|_{ber}
		\end{eqnarray*}
		and 
		\begin{eqnarray*}
			|\langle (A_1+A_2)\hat{k}_x,\hat{k}_y\rangle|^2 &\le& \langle  \big(A^*_1A_1+A^{\dag}_2A_2\big)\hat{k}_x,\hat{k}_x\rangle \langle \big(A_2A^*_2+A_1A^{\dag}_1\big)\hat{k}_y,\hat{k}_y\rangle\\
			&\le& \|A^*_1A_1+A^{\dag}_2A_2\|_{ber}\|A_2A^*_2+A_1A^{\dag}_1\|_{ber}
		\end{eqnarray*}
		Therefore, we get the required inequalities by taking supremum over all $x,y\in X$. \end{proof}
	
	\begin{remark}
		
		(i)~~ In particular, if we take $A_1=A$, $A_2=A^{\dag}$ and  $A_1=A$, $A_2=\big(A^{\dag}\big)^*$ in the first and second  bound  of Theorem \ref{problm}, respectively, then we get the existing results
		\begin{eqnarray*}
			\|A+A^{\dag}\|^2_{ber}&\le &\|AA^{\dag}+A^{\dag}A\|_{ber}\||A|^2+|A^{\dag}|^2\|_{ber}\\
			&=&\textbf{ber}\big(AA^{\dag}+A^{\dag}A\big)\||A|^2+|A^{\dag}|^2\|_{ber}
		\end{eqnarray*}
		and 
		\begin{eqnarray*}
			\|A+\big(A^{\dag}\big)^*\|^2_{ber}&\le &\||A|^2+A^{\dag}A\|_{ber}\||A^{\dag}|^2+AA^{\dag}\|_{ber}\\    &=&\textbf{ber}\big(|A|^2+A^{\dag}A\big)\textbf{ber}\big(|A^{\dag}|^2+AA^{\dag}\big),
		\end{eqnarray*}
		which are proved in \cite[Th. 2.7 (i),(ii)]{guesba2025further}.\\
		(ii)   Considering $A$ to be invertible,  $A_1=A$, $A_2=A^{-1}$  and $A_1=A$, $A_2=(A^{-1})^*$ in the first and second bound of Theorem \ref{problm}, respectively, then we get the existing inequality given in  \cite[ Cor. 2.8]{guesba2025further}, namely,
		
		$$\|A+A^{-1}\|^2_{ber} \le 2\||A|^2+|A^{-1}|^2\|_{ber}$$
		\mbox{and}
		$$ \|A+\big(A^{-1}\big)^*\|^2_{ber}\le \textbf{ber}\big(|A|^2+I\big)\textbf{ber}\big(|A^{-1}|^2+I\big).$$
	\end{remark}

	In the following Theorems, we develop upper bounds of the Berezin number for product operators.

	\begin{theorem}
		Let $A_1,A_2\in  \mathscr{CR}( \mathscr{H})$. Then for any $r\ge 1$\label{som}
		\[\textbf{ber}^r(A^*_1A_2)\le \frac{1}{2}\min\bigg\{\||A_2|^{2r}+(A^*_1A_2A^{\dag}_2A_1)^r\|_{ber},\||A_1|^{2r}+(A^*_2A_1A^{\dag}_1A_2)^r\|_{ber}\bigg\}.\]
		
	\end{theorem}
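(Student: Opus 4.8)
The plan is to run the by-now-familiar "pointwise then supremum" scheme on a normalized reproducing kernel $\hat k_x$, starting from the identity $\langle A_1^*A_2\hat k_x,\hat k_x\rangle=\langle A_2\hat k_x,A_1\hat k_x\rangle$. First I would apply Lemma \ref{bandor} with the operator $A_2\in\mathscr{CR}(\mathscr{H})$, $u=\hat k_x$ and $v=A_1\hat k_x$, which yields
\[
|\langle A_1^*A_2\hat k_x,\hat k_x\rangle|^2\le \langle |A_2|^2\hat k_x,\hat k_x\rangle\,\langle A_2A_2^{\dag}A_1\hat k_x,A_1\hat k_x\rangle=\langle |A_2|^2\hat k_x,\hat k_x\rangle\,\langle A_1^*A_2A_2^{\dag}A_1\hat k_x,\hat k_x\rangle .
\]
Here one records the small fact that $A_1^*A_2A_2^{\dag}A_1=A_1^*(A_2A_2^{\dag})A_1\ge 0$, because $A_2A_2^{\dag}$ is the orthogonal projection onto the (closed) range of $A_2$; this is what makes the next step legitimate.

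Next I would raise the displayed inequality to the power $r/2$ and use the arithmetic–geometric mean inequality (Lemma \ref{cat}(i) with $\lambda=\mu=\tfrac12$) to get
\[
|\langle A_1^*A_2\hat k_x,\hat k_x\rangle|^{r}\le \tfrac12\Big(\langle |A_2|^2\hat k_x,\hat k_x\rangle^{r}+\langle A_1^*A_2A_2^{\dag}A_1\hat k_x,\hat k_x\rangle^{r}\Big).
\]
Since $|A_2|^2$ and $A_1^*A_2A_2^{\dag}A_1$ are positive and $r\ge 1$, Lemma \ref{dog}(i) lets me pull the exponent inside each inner product, so the right-hand side is dominated by
$\tfrac12\big\langle\big(|A_2|^{2r}+(A_1^*A_2A_2^{\dag}A_1)^{r}\big)\hat k_x,\hat k_x\big\rangle\le\tfrac12\||A_2|^{2r}+(A_1^*A_2A_2^{\dag}A_1)^{r}\|_{ber}$, the last step using that for a positive operator the Berezin number and Berezin norm coincide. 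Taking the supremum over $x\in X$ gives $\textbf{ber}^r(A_1^*A_2)\le\tfrac12\||A_2|^{2r}+(A_1^*A_2A_2^{\dag}A_1)^{r}\|_{ber}$, i.e. the first term in the minimum.

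Finally, for the second term I would invoke $\textbf{ber}(A_1^*A_2)=\textbf{ber}\big((A_1^*A_2)^*\big)=\textbf{ber}(A_2^*A_1)$, which holds because $\widetilde{T^*}(x)=\overline{\widetilde{T}(x)}$ and hence $|\widetilde{T^*}(x)|=|\widetilde{T}(x)|$ for every $x$. Applying the bound just established with $A_1$ and $A_2$ interchanged to $\textbf{ber}^r(A_2^*A_1)$ produces $\textbf{ber}^r(A_1^*A_2)\le\tfrac12\||A_1|^{2r}+(A_2^*A_1A_1^{\dag}A_2)^{r}\|_{ber}$, and combining the two estimates gives the stated minimum. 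I do not anticipate a genuine obstacle here: the only point requiring care is the positivity check for $A_1^*A_2A_2^{\dag}A_1$ (and the symmetric $A_2^*A_1A_1^{\dag}A_2$), which is needed to legitimize the use of Lemma \ref{dog}; everything else is a routine chaining of the lemmas.
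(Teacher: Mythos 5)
Your proposal is correct and follows essentially the same route as the paper's proof: Lemma \ref{bandor} applied to $\langle A_2\hat k_x,A_1\hat k_x\rangle$, the arithmetic--geometric mean step, Lemma \ref{dog} to move the power $r$ inside, and the adjoint symmetry $\textbf{ber}(A_2^*A_1)=\textbf{ber}(A_1^*A_2)$ to obtain the second term of the minimum. Your explicit check that $A_1^*A_2A_2^{\dag}A_1\ge 0$ (needed to invoke Lemma \ref{dog}) is a point the paper leaves implicit, and your rearrangement of the power-$r$ bookkeeping is only a cosmetic variant of the paper's use of Lemma \ref{cat}.
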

	\begin{proof}
		Suppose that $\hat{k}_x$ be a normalized reproducing kernel of $ \mathscr{H}$. Then we get
		\begin{eqnarray*}
			|\langle A^*_1A_2\hat{k}_x,\hat{k}_x\rangle|
			&=&   |\langle A_2\hat{k}_x,A_1\hat{k}_x\rangle|\\
			&\le& \langle |A_2|^2 \hat{k}_x,\hat{k}_x\rangle^{\frac{1}{2}}\langle A_2A^{\dag}_2A_1\hat{k}_x, A_1\hat{k}_x\rangle^{\frac{1}{2}}\,\,\,\,\,\,\,\,\,\,\,\,\,\,\,\,\,\,\,\,\,\,\,\,\,\,\,\Big(\mbox{by Lemma \ref{bandor}}\Big)\\
			&=& \langle |A_2|^2 \hat{k}_x,\hat{k}_x\rangle^{\frac{1}{2}}\langle A^*_1A_2A^{\dag}_2A_1\hat{k}_x,  \hat{k}_x\rangle^{\frac{1}{2}}\\
			&\le& \frac{\langle |A_2|^2 \hat{k}_x,\hat{k}_x\rangle}{2}+\frac{\langle A^*_1A_2A^{\dag}_2A_1\hat{k}_x,  \hat{k}_x\rangle}{2}\\
			&\le& \bigg(\frac{\langle |A_2|^2 \hat{k}_x,\hat{k}_x\rangle^r}{2}+\frac{\langle A^*_1A_2A^{\dag}_2A_1\hat{k}_x,  \hat{k}_x\rangle^r}{2}\bigg)^{\frac{1}{r}}\,\,\,\,\Big(\mbox{by Lemma \ref{cat}}\Big)\\
			&\le& \bigg(\frac{\langle |A_2|^{2r} \hat{k}_x,\hat{k}_x\rangle}{2}+\frac{\langle (A^*_1A_2A^{\dag}_2A_1)^r\hat{k}_x,  \hat{k}_x\rangle}{2}\bigg)^{\frac{1}{r}}\Big(\mbox{by Lemma \ref{dog}}\Big)\\
			&=& \bigg(\frac{1}{2}\big\langle \big(|A_2|^{2r} +(A^*_1A_2A^{\dag}_2A_1)^r\big)\hat{k}_x,\hat{k}_x\big\rangle\bigg)^{\frac{1}{r}}\\
			&\le & \bigg(\frac{1}{2}\||A_2|^{2r}+(A^*_1A_2A^{\dag}_2A_1)^r\|_{ber}\bigg)^{\frac{1}{r}}.
		\end{eqnarray*}
		Taking supremum over all $x\in X$, we get  
		\begin{eqnarray}\textbf{ber}^r(A^*_1A_2)\le \frac{1}{2}\||A_2|^{2r}+(A^*_1A_2A^{\dag}_2A_1)^r\|_{ber}.\label{hari3}
		\end{eqnarray}
		Again, similarly interchanging $A_1$ and $A_2$ we get
		\begin{eqnarray} \textbf{ber}^r(A^*_2A_1)=\textbf{ber}^r(A^*_1A_2)\le \||A_1|^{2r}+(A^*_2A_1A^{\dag}_1A_2)^r\|_{ber}. \label{hari4}\end{eqnarray}
		Combining (\ref{hari3}) and (\ref{hari4}) we obtain the desired upper bound.
	\end{proof}
	\begin{remark} In  particular if we consider $A_1=I$, $A_2=A$ and $r=1$ then from Theorem \ref{som} we get \[
		\textbf{ber}(A)\le \frac{1}{2}\||A|^2+AA^{\dag}\|_{ber}=\frac{1}{2}\textbf{ber}\big(|A|^2+AA^{\dag}\big). 
		\]
		Therefore, it is clear that Theorem \ref{som} generalizes the result proved in \cite[Th. 2.2]{guesba2025further}.
	\end{remark}

	Our next result yields another inequality of the Berezin norm.

	\begin{theorem} Let $A_1,B_1,A_2,B_2\in \mathscr{B(H)}$ and $M,N\in  \mathscr{CR(H)}$. Then for $r,s\ge 1$\label{ram}
		\begin{eqnarray*}
			\|A_1^*MB_1+A^*_2NB_2\|^{2}_{ber}&\le& \frac{1}{2^{r+s-2}}\|(B^*_1|M|^2B_1)^r+(B^*_2|N|^2B_2)^r\|^{\frac{1}{r}}_{ber}\\
			&&\| (A^*_1MM^{\dag} A_1)^s+(A^*_2NN^{\dag}A_2)^s\|^{\frac{1}{s}}_{ber}.    
		\end{eqnarray*}
	\end{theorem}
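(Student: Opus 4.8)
The plan is to realize $A_1^*MB_1+A_2^*NB_2$ as the off-diagonal corner of a positive $2\times 2$ operator matrix, apply Lemma \ref{mm} to it, and then convert the two diagonal corners into the stated Berezin norms by combining Lemma \ref{dog} with Bohr's inequality (Lemma \ref{mit}).

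First I would record the positive block matrices involved. Since $M\in\mathscr{CR(H)}$, Lemma \ref{bandor} gives $|\langle Mu,v\rangle|^2\le\langle|M|^2u,u\rangle\langle MM^{\dag}v,v\rangle$ for all $u,v\in\mathscr{H}$; as $|M|^2$ and the projection $MM^{\dag}$ are positive, Lemma \ref{mm} shows $\begin{bmatrix}|M|^2 & M^*\\ M & MM^{\dag}\end{bmatrix}\ge 0$ on $\mathscr{H}\oplus\mathscr{H}$. Conjugating by $\mathrm{diag}(B_1,A_1)$ yields $\begin{bmatrix}B_1^*|M|^2B_1 & B_1^*M^*A_1\\ A_1^*MB_1 & A_1^*MM^{\dag}A_1\end{bmatrix}\ge 0$, and the analogous step for $N$ gives $\begin{bmatrix}B_2^*|N|^2B_2 & B_2^*N^*A_2\\ A_2^*NB_2 & A_2^*NN^{\dag}A_2\end{bmatrix}\ge 0$. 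Setting $P_1=B_1^*|M|^2B_1$, $P_2=B_2^*|N|^2B_2$, $R_1=A_1^*MM^{\dag}A_1$, $R_2=A_2^*NN^{\dag}A_2$ (all positive) and adding the two matrices, it follows that $\begin{bmatrix}P_1+P_2 & (A_1^*MB_1+A_2^*NB_2)^*\\ A_1^*MB_1+A_2^*NB_2 & R_1+R_2\end{bmatrix}\ge 0$, so by Lemma \ref{mm}, for all normalized reproducing kernels $\hat k_x,\hat k_y$,
\[
|\langle(A_1^*MB_1+A_2^*NB_2)\hat k_x,\hat k_y\rangle|^2\le\langle(P_1+P_2)\hat k_x,\hat k_x\rangle\,\langle(R_1+R_2)\hat k_y,\hat k_y\rangle .
\]

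It then remains to bound each of the two factors, and here the two free exponents are treated separately. On the $x$-side, Bohr's inequality (Lemma \ref{mit} with $n=2$) followed by Lemma \ref{dog}(i) gives
\[
\langle(P_1+P_2)\hat k_x,\hat k_x\rangle^{r}\le 2^{r-1}\big(\langle P_1\hat k_x,\hat k_x\rangle^{r}+\langle P_2\hat k_x,\hat k_x\rangle^{r}\big)\le 2^{r-1}\big\langle(P_1^{r}+P_2^{r})\hat k_x,\hat k_x\big\rangle\le 2^{r-1}\|P_1^{r}+P_2^{r}\|_{ber},
\]
and on the $y$-side the same argument with exponent $s$ gives $\langle(R_1+R_2)\hat k_y,\hat k_y\rangle^{s}\le 2^{s-1}\|R_1^{s}+R_2^{s}\|_{ber}$. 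Inserting the $r$-th and $s$-th roots of these two estimates into the displayed inequality and taking the supremum over $x,y\in X$ produces the asserted bound.

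The only delicate part is the simultaneous bookkeeping of the parameters $r$ and $s$ and of the constant coming from the two Bohr steps; the rest is the by-now-standard interplay of Lemma \ref{dog} with the fact that $\langle Q\hat k_x,\hat k_x\rangle\le\|Q\|_{ber}$ for positive $Q$. Note that it is exactly the positivity of $\begin{bmatrix}|M|^2 & M^*\\ M & MM^{\dag}\end{bmatrix}$ (equivalently Lemma \ref{bandor}) that forces the ``one-sided'' operators $B_i^*|\cdot|^2B_i$ and $A_i^*(\cdot)(\cdot)^{\dag}A_i$ to appear, rather than $|A_i^*MB_i|^2$-type quantities. Alternatively one can avoid block matrices altogether: split $\langle(A_1^*MB_1+A_2^*NB_2)\hat k_x,\hat k_y\rangle$ by the triangle inequality, estimate each summand directly by Lemma \ref{bandor}, and combine the two terms by the scalar Cauchy--Schwarz inequality (\ref{sree}) before carrying out the Bohr and Lemma \ref{dog} steps.
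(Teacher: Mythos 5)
Your construction is sound and, despite the block-matrix packaging, is essentially the paper's proof in disguise: the paper splits $\langle(A_1^*MB_1+A_2^*NB_2)\hat{k}_x,\hat{k}_y\rangle$ by the triangle inequality, applies Lemma \ref{bandor} to each summand and recombines via the scalar Cauchy--Schwarz inequality (\ref{sree}) --- exactly the ``alternative'' you sketch in your closing sentence --- and this lands on the same two factors $\langle(P_1+P_2)\hat{k}_x,\hat{k}_x\rangle$ and $\langle(R_1+R_2)\hat{k}_y,\hat{k}_y\rangle$ that your positive $2\times 2$ matrix produces. The congruence by $\mathrm{diag}(B_1,A_1)$, the use of Lemma \ref{mm} in both directions, and the Bohr--Jensen treatment of each factor are all correct. (The paper uses Lemma \ref{cat}(i) with weights $\tfrac12,\tfrac12$ where you use Lemma \ref{mit} with $n=2$; these give the same estimate.)

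The genuine gap is exactly at the point you wave off as ``delicate bookkeeping'': it does not work out. Your estimates give $\langle(P_1+P_2)\hat{k}_x,\hat{k}_x\rangle\le 2^{\frac{r-1}{r}}\|P_1^{r}+P_2^{r}\|_{ber}^{1/r}$ and $\langle(R_1+R_2)\hat{k}_y,\hat{k}_y\rangle\le 2^{\frac{s-1}{s}}\|R_1^{s}+R_2^{s}\|_{ber}^{1/s}$, so their product carries the factor $2^{\frac{r-1}{r}+\frac{s-1}{s}}=2^{2-\frac1r-\frac1s}$, whereas the asserted constant is $\frac{1}{2^{r+s-2}}=2^{2-r-s}$. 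These agree only when $r=s=1$; for $r,s>1$ one has $2^{2-\frac1r-\frac1s}>1>2^{2-r-s}$, so the stated constant cannot be reached by this route. In fact the statement with $\frac{1}{2^{r+s-2}}$ is false as written: take $A_1=B_1=A_2=B_2=M=N=I$ and $r=s=2$; the left side is $\|2I\|_{ber}^{2}=4$ while the right side is $\frac{1}{2^{2}}\cdot 2^{1/2}\cdot 2^{1/2}=\frac12$. To be fair, the paper's own proof commits the identical slip in its final displayed line (its penultimate line carries $4\cdot 2^{-1/r}\cdot 2^{-1/s}$, which is silently replaced by $\frac{1}{2^{r+s-2}}$). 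What both arguments actually establish is the theorem with $\frac{1}{2^{\frac1r+\frac1s-2}}$ in place of $\frac{1}{2^{r+s-2}}$, and that is the constant you should state and carry through your last step.
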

	\begin{proof} Let $\hat{k}_x$ and  $\hat{k}_y$ be the normalized reproducing kernels of $ \mathscr{H}$.
		\begin{eqnarray*}
			&&|\langle (A^*_1MB_1+A^*_2NB_2)\hat{k}_x,\hat{k}_y \rangle|^2\\
			&\le&\big(|\langle A^*_1MB_1\hat{k}_x,\hat{k}_y \rangle|+|\langle A^*_2NB_2 \hat{k}_x,\hat{k}_y\rangle|\big)^2\\
			&=&\big(|\langle MB_1\hat{k}_x,A_1\hat{k}_y \rangle|+|\langle NB_2\hat{k}_x,A_2\hat{k}_y\rangle|\big)^2\\
			&\le&\bigg(\langle |M|^2B_1\hat{k}_x,B_1\hat{k}_x\rangle^{\frac{1}{2}} \langle MM^{\dag} A_1\hat{k}_y       ,A_1\hat{k}_y \rangle^{\frac{1}{2}}\\
			&&\,\,\,\,\,\,\,\,+\langle |N|^2B_2 \hat{k}_x,B_2\hat{k}_x\rangle^{\frac{1}{2}}\langle NN^{\dag}A_2\hat{k}_y,
			A_2\hat{k}_y\rangle^{\frac{1}{2}}\bigg)^2\,\,\,\Big(\mbox{ by Lemma \ref{bandor}}\Big)\\
			&\le& \bigg( \langle |M|^2B_1\hat{k}_x,B_1\hat{k}_x\rangle+ \langle |N|^2B_2 \hat{k}_x,B_2 \hat{k}_x\rangle\bigg)\\
			&&\,\,\,\,\,\,\,\,\bigg(\langle MM^{\dag} A_1\hat{k}_y      ,A_1\hat{k}_y \rangle+\langle NN^{\dag}A_2\hat{k}_y,
			A_2\hat{k}_y\rangle\bigg)\,\,\,\,\,\,\,\,\,\, \Big(\mbox{by (\ref{sree})}\Big)\\
			&=&4 \bigg( \frac{\langle |M|^2B_1\hat{k}_x,B_1\hat{k}_x\rangle}{2}+ \frac{\langle |N|^2B_2 \hat{k}_x,B_2 \hat{k}_x\rangle}{2}\bigg)\\
			&&\,\,\,\,\,\,\,\,\bigg(\frac{\langle MM^{\dag} A_1\hat{k}_y       ,A_1\hat{k}_y \rangle}{2}+\frac{\langle NN^{\dag}A_2\hat{k}_y,
				A_2\hat{k}_y\rangle}{2}\bigg)\\
			&\le&4 \bigg( \frac{\langle B^*_1|M|^2B_1\hat{k}_x,\hat{k}_x\rangle^r}{2}+ \frac{\langle B^*_2|N|^2B_2 \hat{k}_x, \hat{k}_x\rangle^r}{2}\bigg)^{\frac{1}{r}}\\&&\bigg(\frac{\langle A^*_1MM^{\dag} A_1\hat{k}_y      ,\hat{k}_y \rangle^s}{2}+\frac{\langle A^*_2NN^{\dag}A_2\hat{k}_y,
				\hat{k}_y\rangle^s}{2}\bigg)^{\frac{1}{s}}\,\,\,\Big(\mbox{ by Lemma \ref{cat}}\Big)\\
			&\le& 4\bigg( \langle \big(\frac{(B^*_1|M|^2B_1)^r+(B^*_2|N|^2B_2)^r}{2}\big)\hat{k}_x,\hat{k}_x\rangle\bigg)^{\frac{1}{r}}\\
			&& \,\,\,\,\,\,\,\,\bigg(\big\langle \big(\frac{(A^*_1MM^{\dag} A_1)^s+(A^*_2NN^{\dag}A_2)^s}{2}\big)\hat{k}_y     ,\hat{k}_y\big \rangle\bigg)^{\frac{1}{s}}\,\,\,\Big(\mbox{ by Lemma \ref{dog}}\Big)\\
			&\le& \frac{1}{2^{r+s-2}}\|(B^*_1|M|^2B_1)^r+(B^*_2|N|^2B_2)^r\|^{\frac{1}{r}}_{ber}\| (A^*_1MM^{\dag} A_1)^s+(A^*_2NN^{\dag}A_2)^s\|^{\frac{1}{s}}_{ber}.
		\end{eqnarray*}
		Therefore, we get the desired inequalities taking the supremum over all $x,y\in X$.
	\end{proof}
	The following corollary is derived by taking $A_1=B_1=A_2=B_2=I$ in Theorem \ref{ram}.
	\begin{cor}  Let $M,N\in  \mathscr{CR(H)}$. Then for $r,s\ge 1$\label{ram}
		\begin{eqnarray*}
			\|M+N\|^{2}_{ber}&\le& \frac{1}{2^{r+s-2}}\||M|^{2r}+|N|^{2r}\|^{\frac{1}{r}}_{ber}\| (MM^{\dag} )^s+(NN^{\dag})^s\|^{\frac{1}{s}}_{ber}.         
	\end{eqnarray*}\end{cor}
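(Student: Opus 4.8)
The plan is to obtain this corollary simply by specializing Theorem~\ref{ram} to the choice $A_1=B_1=A_2=B_2=I$. First I would look at the left-hand side of Theorem~\ref{ram}: with these substitutions $A_1^*MB_1 = M$ and $A_2^*NB_2 = N$, so $\|A_1^*MB_1+A_2^*NB_2\|_{ber}^2$ collapses to $\|M+N\|_{ber}^2$, which is exactly the quantity we want to estimate. Before invoking the theorem one should note that its hypotheses are satisfied: $I\in\mathscr{B}(\mathscr{H})$ trivially plays the role of $A_1,B_1,A_2,B_2$, while $M$ and $N$ are assumed to lie in $\mathscr{CR}(\mathscr{H})$, so that the Moore--Penrose inverses $M^{\dag}$ and $N^{\dag}$ appearing on the right-hand side are well defined.

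Next I would simplify the right-hand side of Theorem~\ref{ram} under the same substitution. Since $B_1=B_2=I$, we have $B_1^*|M|^2B_1=|M|^2$ and $B_2^*|N|^2B_2=|N|^2$, hence $(B_1^*|M|^2B_1)^r=|M|^{2r}$ and $(B_2^*|N|^2B_2)^r=|N|^{2r}$. Likewise, since $A_1=A_2=I$, we have $A_1^*MM^{\dag}A_1=MM^{\dag}$ and $A_2^*NN^{\dag}A_2=NN^{\dag}$, so $(A_1^*MM^{\dag}A_1)^s=(MM^{\dag})^s$ and $(A_2^*NN^{\dag}A_2)^s=(NN^{\dag})^s$. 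Plugging these identities into the bound
\[
\|A_1^*MB_1+A_2^*NB_2\|_{ber}^2\le \frac{1}{2^{r+s-2}}\|(B_1^*|M|^2B_1)^r+(B_2^*|N|^2B_2)^r\|_{ber}^{\frac{1}{r}}\|(A_1^*MM^{\dag}A_1)^s+(A_2^*NN^{\dag}A_2)^s\|_{ber}^{\frac{1}{s}}
\]
from Theorem~\ref{ram} yields precisely $\|M+N\|_{ber}^2\le \frac{1}{2^{r+s-2}}\||M|^{2r}+|N|^{2r}\|_{ber}^{1/r}\|(MM^{\dag})^s+(NN^{\dag})^s\|_{ber}^{1/s}$, as claimed.

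There is essentially no obstacle here: the statement is an immediate corollary, and the proof amounts to carefully bookkeeping the substitution. If one preferred a self-contained argument, one could instead rerun the proof of Theorem~\ref{ram} with $A_1=B_1=A_2=B_2=I$ inserted throughout, splitting $|\langle M\hat{k}_x,\hat{k}_y\rangle|$ and $|\langle N\hat{k}_x,\hat{k}_y\rangle|$ by Lemma~\ref{bandor}, applying the scalar inequality (\ref{sree}), then Lemma~\ref{cat} to introduce the exponents $r$ and $s$, Lemma~\ref{dog} to move the powers inside the inner products, and finally passing to the Berezin norm and taking the supremum over all $x,y\in X$.
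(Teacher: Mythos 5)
Your proposal is correct and matches the paper exactly: the paper derives this corollary by the same one-line specialization $A_1=B_1=A_2=B_2=I$ in Theorem~\ref{ram}, and your bookkeeping of the substitutions (including $(B_1^*|M|^2B_1)^r=|M|^{2r}$, etc.) is accurate.
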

	
	\begin{remark}
		In particular, if we take $M=N=I$ in  Theorem \ref{ram} then it becomes
		\begin{eqnarray}
			\|A^*_1B_1+A^*_2B_2\|^{2}_{ber}\le \frac{1}{2^{r+s-2}}\||B_1|^{2r}+|B_2|^{2r}\|^{\frac{1}{r}}_{ber}\| | A_1|^{2s}+|A_2|^{2s}\|^{\frac{1}{s}}_{ber}. \label{ram33}  \end{eqnarray}
		In \cite[Th. 3.5]{garayev2019vcebyvsev} it was proved that if $A_1,B_1,A_2, B_2\in  \mathscr{B(H)}$ then
		\begin{eqnarray*}
			\textbf{ber}^2\big(A^*_1B_1+A^*_2B_2\big)&\le&\frac{1}{2^{r+s-2}}\textbf{ber}^{\frac{1}{r}}\big(|B_1|^{2r}+|B_2|^{2r}\big)\textbf{ber}^{\frac{1}{s}} \big(|A_1|^{2s}+|A_2|^{2s}\big), \end{eqnarray*} which follows form inequality (\ref{ram33}).
	\end{remark}

	\begin{theorem}
		Let $A_i, B_i\in \mathscr{B(H)}$, $i=1,\cdots,n$ and $M_i\in \mathscr{C} \mathscr{B(H)}$. Then for any $r\ge 1$ \label{ram44}
		\[\textbf{ber}^r\bigg(\sum_{i=1}^n A^*_iM_iB_i\bigg)\le\frac{n^{r-1}}{2^r}\bigg\| \sum_{i=1}^n\big(B^*_i|M_i|^{2}B_i+ A^*_iM_iM^{\dag}_i A\big)^r\bigg\|_{ber}.\]
	\end{theorem}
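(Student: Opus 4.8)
The plan is to run the same machinery used in Theorem~\ref{ram} and its corollary, but with $n$ summands, the extra $r$-th power being absorbed by Bohr's inequality (Lemma~\ref{mit}). Fix a normalized reproducing kernel $\hat{k}_x$ of $\mathscr{H}$. By the triangle inequality together with the identity $\langle A_i^* M_i B_i \hat{k}_x,\hat{k}_x\rangle=\langle M_i B_i \hat{k}_x, A_i \hat{k}_x\rangle$, we get $\big|\big\langle \sum_{i=1}^n A_i^* M_i B_i \hat{k}_x,\hat{k}_x\big\rangle\big|\le \sum_{i=1}^n |\langle M_i B_i \hat{k}_x, A_i \hat{k}_x\rangle|$. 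Applying Lemma~\ref{bandor} to each term with $u=B_i\hat{k}_x$ and $v=A_i\hat{k}_x$, and then shifting $B_i,A_i$ across the inner products, yields $|\langle M_i B_i \hat{k}_x, A_i \hat{k}_x\rangle|\le \langle B_i^*|M_i|^2 B_i \hat{k}_x,\hat{k}_x\rangle^{1/2}\langle A_i^* M_i M_i^{\dag} A_i \hat{k}_x,\hat{k}_x\rangle^{1/2}$.

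Next, set $C_i:=B_i^*|M_i|^2 B_i + A_i^* M_i M_i^{\dag} A_i$, which is a positive operator (the first term is manifestly positive, and $M_i M_i^{\dag}$ is an orthogonal projection by the Moore--Penrose relations, so the second term is positive as well). The arithmetic--geometric mean inequality, i.e.\ Lemma~\ref{cat}(i) with $\lambda=\mu=\tfrac12$, bounds each product of square roots above by $\tfrac12\langle C_i\hat{k}_x,\hat{k}_x\rangle$, whence $\sum_{i=1}^n |\langle M_i B_i \hat{k}_x, A_i \hat{k}_x\rangle|\le \tfrac12\big\langle \sum_{i=1}^n C_i \hat{k}_x,\hat{k}_x\big\rangle$.

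Finally, raising both sides to the power $r\ge 1$ and applying Bohr's inequality (Lemma~\ref{mit}) to the nonnegative numbers $\langle C_i\hat{k}_x,\hat{k}_x\rangle$ gives $\big|\big\langle \sum_{i} A_i^* M_i B_i \hat{k}_x,\hat{k}_x\big\rangle\big|^r\le \tfrac{1}{2^r}\big(\sum_i \langle C_i\hat{k}_x,\hat{k}_x\rangle\big)^r\le \tfrac{n^{r-1}}{2^r}\sum_i \langle C_i\hat{k}_x,\hat{k}_x\rangle^r$. Since each $C_i$ is positive and $r\ge 1$, Lemma~\ref{dog}(i) gives $\langle C_i\hat{k}_x,\hat{k}_x\rangle^r\le \langle C_i^r\hat{k}_x,\hat{k}_x\rangle$; recombining the sum we obtain $\big|\big\langle \sum_{i} A_i^* M_i B_i \hat{k}_x,\hat{k}_x\big\rangle\big|^r\le \tfrac{n^{r-1}}{2^r}\big\langle \sum_i C_i^r\hat{k}_x,\hat{k}_x\big\rangle\le \tfrac{n^{r-1}}{2^r}\big\|\sum_i C_i^r\big\|_{ber}$, and taking the supremum over $x\in X$ delivers the claimed inequality. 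No single step is genuinely delicate; the only point requiring care is doing the power manipulations in the correct order — distributing the $r$-th power through the sum via Bohr's inequality \emph{before} invoking Lemma~\ref{dog}(i) — and bookkeeping the resulting constant $n^{r-1}/2^r$.
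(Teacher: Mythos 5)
Your proof is correct and follows essentially the same route as the paper's: triangle inequality, Lemma \ref{bandor} applied to $\langle M_iB_i\hat{k}_x,A_i\hat{k}_x\rangle$, the AM--GM step, Bohr's inequality (Lemma \ref{mit}), and Lemma \ref{dog}. The only difference is cosmetic — the paper applies Bohr's inequality first to the terms $|\langle A_i^*M_iB_i\hat{k}_x,\hat{k}_x\rangle|$ and then bounds each $r$-th power, whereas you bound the sum first and apply Bohr to the resulting $\langle C_i\hat{k}_x,\hat{k}_x\rangle$; both orderings yield the identical constant $n^{r-1}/2^r$.
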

	\begin{proof} Let $\hat{k}_x$ be a normalized reproducing kernel of $ \mathscr{H}$. Then
		\begin{eqnarray*}
			&&|\langle (\sum_{i=1}^n (A^*_iM_iB_i)\hat{k}_x,\hat{k}_x\rangle|^r\\
			&= &|\sum_{i=1}^n \langle A^*_iM_iB_i\hat{k}_x,\hat{k}_x\rangle|^r\\
			&\le &\bigg(\sum_{i=1}^n| \langle A^*_iM_iB_i\hat{k}_x,\hat{k}_x\rangle|\bigg)^r\\
			&\le &n^{r-1}\bigg(\sum_{i=1}^n| \langle A^*_iM_iB_i\hat{k}_x,\hat{k}_x\rangle|^r\bigg)\,\,\,\Big(\mbox{ by Lemma \ref{mit}}\Big)\\
			&=&n^{r-1}\bigg(\sum_{i=1}^n| \langle M_iB_i\hat{k}_x,A_i\hat{k}_x\rangle|^r\bigg)\\
			&\le&n^{r-1}\bigg(\sum_{i=1}^n \big(\langle M^*_iM_iB_i\hat{k}_x,B_i\hat{k}_x\rangle^{\frac{1}{2}} \langle M_iM^{\dag}_i A_i\hat{k}_x,A_i\hat{k}_x\rangle^{\frac{1}{2}}\big)^r\bigg)\\
			&\le&\frac{n^{r-1}}{2^r}\bigg(\sum_{i=1}^n \big(\langle (B^*_i|M_i|^{2}B_i)\hat{k}_x,\hat{k}_x\rangle+ \langle (A^*_iM_iM^{\dag}_i A)\hat{k}_x,\hat{k}_x\rangle\big)^r\bigg) \\
			&=&\frac{n^{r-1}}{2^r}\big \langle \sum_{i=1}^n(B^*_i|M_i|^{2}B_i+A^*_iM_iM^{\dag}_i A)^r\hat{k}_x,\hat{k}_x\big\rangle\,\,\,\,\,\,\Big(\mbox{ by Lemma \ref{dog}}\Big)\\
			&\le&\frac{n^{r-1}}{2^r}\bigg\| \sum_{i=1}^n\big(B^*_i|M_i|^{2}B_i+ A^*_iM_iM^{\dag}_i A\big)^r\bigg\|_{ber}.\\
		\end{eqnarray*}
		Hence, taking supremum over all $x\in X$, we get the desired upper bound.
	\end{proof}

	\begin{remark} 
		(i)~~In particular, if we consider $M=I$ and $A^*=A$ in Theorem \ref{ram44}, then the inequality becomes 
		\[\textbf{ber}^r\bigg(\sum_{i=1}^n A_iB_i\bigg)\le \frac{n^{r-1}}{2^r}\|\sum_{i=1}^{n}\big(B_i^*B_i+A_iA_i^*\big)^r\|_{ber},\]
		which was obtained in \cite[Cor. 2.28]{bhunia2024berezin}.\\
		(ii)~~ In particular, if we consider $B=I$, $A=I$, $r=1$  and $n=1$ in Theorem \ref{ram44}, then the inequality becomes 
		\[ \textbf{ber}(M)\le \frac{1}{2}\||M|^2+MM^{\dag}\|_{ber} =\frac{1}{2}\textbf{ber}(|M|^2+MM^{\dag})\]
		which was obtained in \cite[Th. 2.2]{guesba2025further}.
	\end{remark}

	\textit{Acknowledgements.} Mr. Saikat Mahapatra would like to thank the UGC, Govt. of India for the financial support (NTA Ref. No. 211610170555) in the form of a Senior Research Fellowship under the mentorship of Dr. Riddhick Birbonshi. Mr Anirban Sen would like to thank the CSIR, Govt. of India, for the financial support in the form of a Senior Research Fellowship under the mentorship of Prof. Kallol Paul.

	\bibliographystyle{amsplain}

\end{document}